\documentclass[a4paper, 12pt, notitlepage]{article}
\topmargin=0cm
\oddsidemargin=0cm
\textwidth=16cm
\textheight=23cm

\usepackage[T1]{fontenc}
\usepackage[cp1250]{inputenc}
\usepackage[english]{babel}
\usepackage{amsfonts}
\usepackage{amsmath}
\usepackage{enumerate}
\usepackage{graphics}
\usepackage{graphicx}

\newtheorem{definition}{Definition}[section]
\newtheorem{theorem}{Theorem}[section]

\newtheorem{coro}{Corollary}[section]

\newtheorem{lemma}{Lemma}[section]
\newtheorem{assumption}{Hypothesis}[section]
\newtheorem{proposition}{Proposition}[section]

\newenvironment{proof}{\textsc{Proof.} }{\begin{flushright}$\Box$\end{flushright}}
\newenvironment{remark}{\par\vspace{0.2cm}\textbf{Remark.} }{\par\vspace{0.2cm}}

\newcommand{\xnorm}[2]{\left\|\, #1\, \right\|_{#2}}
\newcommand{\hnorm}[1]{\left|\, #1\, \right|}

\newcommand{\dual}[1]{{#1}^*}

\newcommand{\Ha}[1]{H^{#1}(\Omega)}

\newcommand{\Lp}[1]{L^{#1}(\Omega)}

\newcommand{\Ce}[1]{C([0,T];#1)}
\newcommand{\eLp}[2]{L^{#1}(0,T;#2)}
\newcommand{\Wunp}[2]{W^{#1}(0,T;#2)}

\newcommand{\Rn}[1]{\mathbb{R}^{#1}}
\newcommand{\R}{\mathbb{R}}

\hyphenation{Ca-ra-th\'e-o-do-ry}

\title{\bf{CLOSED-LOOP CONTROL OF A REACTION-DIFFUSION SYSTEM}}
\author{Grzegorz Dudziuk\\ \\Interdisciplinary Centre for Mathematical\\and Computational Modelling\\University of Warsaw\\Pawinskiego 5a, 02-106 Warsaw, Poland\\grzegorz.dudziuk@mimuw.edu.pl}
\date{June 29, 2011}

\begin{document}
\renewcommand{\theequation}{\thesection.\arabic{equation}}

\vspace{2.7cm}

\begin{center}
\begin{large}\bf{CLOSED-LOOP CONTROL OF A REACTION-DIFFUSION SYSTEM}
\end{large}\\
\vspace{0.2cm}
\textit{(September, 2011)}
\end{center}
\vspace{0.2cm}

\begin{center}
\textsc{Grzegorz Dudziuk}\\
Interdisciplinary Centre for Mathematical and Computational Modelling\\University of Warsaw\\Pawińskiego 5a, 02-106 Warsaw, Poland\\
E-mail: grzegorz.dudziuk@mimuw.edu.pl
\end{center}

\begin{center}
\textsc{Marek Niezgódka}\\
Interdisciplinary Centre for Mathematical and Computational Modelling\\University of Warsaw\\Pawińskiego 5a, 02-106 Warsaw, Poland\\
E-mail: m.niezgodka@icm.edu.pl
\end{center}
\vspace{1cm}

\hspace*{-0.6cm}\textbf{Abstract.}
A system of a parabolic partial differential equation coupled with ordinary differential inclusions that arises from a closed-loop control problem for a thermodynamic process governed by the Allen-Cahn diffusion reaction model is studied. A feedback law for the closed-loop control is proposed and implemented in the case of a finite number of control devices located inside the process domain basing on the process dynamics observed at a finite number of measurement points. The existence of solutions to the discussed system of differential equations is proved with the use of a generalization of the Kakutani fixed point theorem.

\renewcommand{\thefootnote}{}
\footnotetext{\hspace*{-0.6cm}\textit{Acknowledgement:} \textsc{Grzegorz Dudziuk} was supported by the International Ph.D. Projects Programme of Foundation for Polish Science operated within the Innovative Economy Operational Programme 2007-20013 funded by European Regional Development Fund (Ph.D. Programme: Mathematical Methods in Natural Sciences).}
\footnotetext{\hspace*{-0.6cm}\textit{AMS Subject Classification:} 35Q79, 35Q93.}
\footnotetext{\begin{center}
\includegraphics[width=16cm]{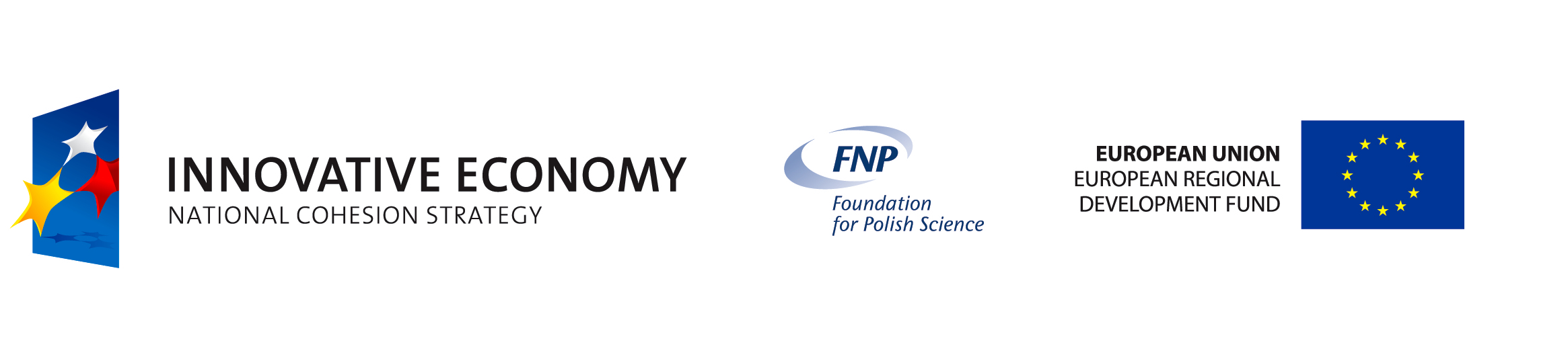}
\end{center}  }
\renewcommand{\thefootnote}{\arabic{footnote}}\setcounter{footnote}{0}

\newpage

\section{Introduction}\setcounter{equation}{0}
This paper results from a study of a control problem for a thermodynamic process governed by the reaction-diffusion equation:
\begin{equation}
 u_t(x,t) - \Delta u(x,t) = f(u(x,t))
\label{eqnrd}\end{equation}
The underlying idea refers to an implementation of the closed-loop control system with a finite number of both control units and measurement points inside the domain of the process evolution. Such a control system leads to the following system of parabolic partial differential equation coupled with ordinary differential inclusions:
\begin{equation}
\left\{\begin{array}{lrl}
u_t(x,t) - \Delta u(x,t) = f(u(x,t)) + g(x,t;\kappa(t))& \textrm{on}&Q_T\\
\beta_1 \dot{\kappa}_1(t) + \kappa_1(t)\in W_1(t,u(x_1^*,t),\ldots,u(x_n^*,t))& \textrm{on}&[0,T]\\
\vdots &\vdots&\\
\beta_m \dot{\kappa}_m(t) + \kappa_m(t)\in W_m(t,u(x_1^*,t),\ldots,u(x_n^*,t))& \textrm{on}&[0,T]\\
\quad&&\\
\frac{\partial u}{\partial n} = 0 & \textrm{on}& \partial\Omega\times (0,T)\\
u(0,x)=u_0(x) & \textrm{for}& x\in\Omega\\
\kappa_j(0)=a_j & \textrm{for}& j=1,\ldots,m
\end{array}\right.
\label{eqnmain}\end{equation}
where $Q_T=\Omega\times (0,T)$, $\Omega$ is bounded, open and sufficiently regular domain in $\Rn{d}$, $x_k^*\in\Omega$, $k=1,\ldots,n$ are fixed points in the interior of $\Omega$, $W_j$ defined on $[0,T]\times\R^n$ for $j=1,\ldots,m$ are given multivalued functions, $f:\R\rightarrow\R$ and $g:Q_T\times\R^m\rightarrow\R$. The unknown is represented by the couple $(u,\kappa)$, where $u:Q_T\rightarrow \R$, $\kappa_j:[0,T]\rightarrow \R$ and $\kappa(t)=\left(\kappa_j(t)\right)_{j=1}^m$. 

The main objective of the present paper is to explore existence questions for the solutions to system (\ref{eqnmain}).

The idea of controlling a thermodynamic process by a closed-loop control based on a finite number of control devices was already addressed in several publications. To mention some of those, problem of controlling linear heat flow was considered for example in \cite{gs1} (for one-dimensional model) and in \cite{gs2}, \cite{gj1}, \cite{gj2} (for multi-dimensional models). Closed-loop controls of the above type for the two-phase Stefan problem was also extensively investigated, cf. \cite{fh} (one-dimensional model) and \cite{hns}, \cite{cgs} (multi-dimensional case). A nonlinear parabolic system without free boundary was treated in \cite{cc} in control context, however a very special form of the nonlinear term was admitted there. 

In general, the concepts of closed-loop control systems presented in the above papers can be divided into two categories: 1) the control and measurement devices are supposed to take their actions without delay and 2) the occurrence of such delay is admitted.

The model described by (\ref{eqnmain}) corresponds to the former type. We also try to keep assumptions concerning the nonlinear term $f$ possibly general. Moreover, the structure of (\ref{eqnmain}) assumes that the control devices are placed inside the domain of the process (the term $g$ represents the control devices actions), while in all above mentioned papers the control devices were assumed to act through the boundary of the domain.

The plan of this paper is as follows. In Section~\ref{definitionofsolution} solutions to system (\ref{eqnmain}) are defined and an existence theorem for those solutions is formulated. 

Section~\ref{construction} shows an applied context for the system (\ref{eqnmain}): a construction of the closed-loop control system based on a finite number of measurement points and a finite number of control devices is presented.

Section~\ref{theproof} provides a proof of the existence theorem stated in Section~\ref{definitionofsolution}. The proof is based on a fixed point argument for a multivalued operator with the use of a generalized Kakutani fixed point theorem.

\section{Definition of solutions for system (\ref{eqnmain}) and the main existence theorem}\label{definitionofsolution}\setcounter{equation}{0}
We first formulate a set of basic notations and assumptions for this paper.

Let us denote
\begin{equation}
K=\Ce{\R^m}
\label{000eqn1.1}\end{equation}
and, for any arbitrary $R>0$,
\begin{equation}
 M_R = \left\{ (\kappa_1,\ldots,\kappa_m)\in \Wunp{1,\infty}{\R^m}:\ \xnorm{\kappa_i}{W^{1,\infty}(0,T)}\leq R,\ i=1,\ldots,m\right\}\,.
\label{000eqn1.2}\end{equation}
$M_R$ is nonempty, convex and compact subset of $K$, with compactness following via the Arzel\`a-Ascoli theorem.

We assume that:
\begin{enumerate}[(I)]
\item\label{asm1} $\Omega$ is an open and bounded domain in $\R^d$, with $C^1$-boundary.
\item\label{asm1.1} $x_k^*\in\textrm{int}\,\Omega$ for $k=1,\ldots,n$.
\item\label{asm2}
	\begin{enumerate}[a)]
	\item\label{asm2a} $f$ satisfies the linear growth condition, $f(s)s\leq c_1+ c_2 s^2$ for some positive finite constants $c_1$, $c_2$,
	\item\label{asm2b} $f$ is locally Lipschitz continuous,
	\item\label{asm2c} $f(0)=0$.
	\end{enumerate}
\item\label{asm3} The multivalued functions $W_j:[0,T]\times \R^n \longrightarrow 2^{\R}$, $j=1,\ldots,m$ satisfy the conditions:
	\begin{enumerate}[a)]
	\item\label{asm3a} $W_j(t,r)$ are nonempty, closed and convex subsets of $\R$ for every $(t,r)\in [0,T]\times \R^n$,
	\item\label{asm3b} $W_j$ are upper semicontinuous (see Definition \ref{000defap1}),
	\item\label{asm3c} $W_j$ are bounded: there exist finite $C_j>0$ such that $W(t,r)\subset [-C_j,C_j]$ for every $(t,r)\in [0,T]\times \R^n$.
	\end{enumerate}
\item\label{asm4} We set:
	\begin{enumerate}[a)]
	\item\label{asm4a} $S=\max_{j=1,\ldots,m}\left\{ \hnorm{a_j}+C_j+\frac{\hnorm{a_j}+2}{\beta_j} \right\}$,
	\item\label{asm4b} $p$ and $q$ are fixed reals such that
		\[ p>1\quad\textrm{and}\quad q\geq 2\quad\textrm{and}\quad \frac{d}{2p}+\frac{1}{q}<1\,,\]
		in particular, $q=2$ and $p=d$.
	\end{enumerate}
\item\label{asm5}
	\begin{enumerate}[a)]
	\item\label{asm5a} $g(\,.\,,\,.\,;\kappa(\,.\,))\in\eLp{2}{\Lp{\infty}}$ for every $\kappa\in K$,
	\item\label{asm5b} $\xnorm{g(\,.\,,\,.\,;\kappa^1(\,.\,)) - g(\,.\,,\,.\,;\kappa^2(\,.\,))}{L^1(Q_T)} \leq c_3 \sum_{j=1}^{m}\xnorm{\kappa^1_j - \kappa^2_j}{C[0,T]}$ \\with some positive constant $c_3$, for all $\kappa^1,\kappa^2\in M_S$,
	\item\label{asm5c} $\xnorm{g(\,.\,,\,.\,;\kappa(\,.\,))}{\eLp{q}{\Lp{p}}}\leq c_4$ \\with some positive constant $c_4$, for every $\kappa\in M_S$.
	\end{enumerate}
\item\label{asm6} $u_0\in\Lp{\infty}$.
\item\label{asm7} $\beta_j > 0$, $a_j\in\R$.
\end{enumerate}

Prior to defining solutions to the system (\ref{eqnmain}), we shall define those of the following parabolic problem:
\begin{equation}
\left\{\begin{array}{lrl}
u_t(x,t) - \Delta u(x,t) = f(u(x,t)) + g(x,t)& \textrm{on}&Q_T\\
\frac{\partial u}{\partial n} = 0 & \textrm{on}& \partial\Omega\times (0,T)\\
u(0,x)=u_0(x) & \textrm{for}& x\in\Omega
\end{array}\right.
\label{000eqn1}\end{equation}
and of the ordinary differential inclusion
\begin{equation}
\left\{\begin{array}{lrl}
\beta \dot{\gamma}(t) + \gamma(t)\in \mathbb{W}(t)& \textrm{on}&[0,T]\\
\gamma(0)=\gamma_0 & & 
\end{array}\right.\,,
\label{000eqn2}\end{equation}
the latter representing a generalization of
\begin{equation}
\left\{\begin{array}{lrl}
\beta \dot{\gamma}(t) + \gamma(t)=\mathbb{V}(t)& \textrm{on}&[0,T]\\
\gamma(0)=\gamma_0 & & 
\end{array}\right. \,.
\label{000eqn3}\end{equation}
For the system (\ref{000eqn1}) we assume the following definition:
\begin{definition}\footnote{$\left<\,.\,,\,.\,\right>$ denotes the paring between $\Ha{1}$ and its dual space, $(\,.\,,\,.\,)$ denotes the standard scalar product in $\Lp{2}$.}
For $u_0\in\Lp{2}$ and $g\in \eLp{2}{\dual{\Ha{1}}}$, $u$ is a weak solution to the problem (\ref{000eqn1}) if
\begin{displaymath}
u\in \eLp{2}{\Ha{1}}\cap\Ce{\Lp{2}},\quad u'\in \eLp{2}{\dual{\Ha{1}}}
\end{displaymath}
and
\begin{displaymath}
\int_0^T\left<\phi,u' \right> - (\phi,f(u)+g) + (\nabla \phi,\nabla u)\,dt=0
\end{displaymath}
for all $\phi \in \eLp{2}{\Ha{1}}$ and $u(0) = u_0$.
\label{000def1}\end{definition}
For $g$ is of the form $g(\,.\,,\,.\,;\kappa(\,.\,))$,  meeting the assumption (\ref{asm5}) and $\kappa\in K$, the problem (\ref{000eqn1}) admits a unique weak solution $u$ (see Theorem \ref{th1}) which is continuous in $Q_T$ (see Theorem \ref{th3}), thus a (single-valued) mapping
\begin{equation}
\mathfrak{u}:K\rightarrow C(Q_T)
\label{000eqn1.3}\end{equation}
can be defined, which to any given $\kappa\in K$ assigns the weak solution $u$ of (\ref{000eqn1}).

The solutions of (\ref{000eqn2}) are defined by the following
\begin{definition} For $\gamma_0\in\R$ and $\mathbb{W}\in L^{\infty}(0,T)$:
\begin{enumerate}[a)]
\item $\gamma$ is a solution to (\ref{000eqn2}), if in the Carath\'eodory sense it is a solution to (\ref{000eqn3}) for some integrable function $\mathbb{V}:[0,T]\longrightarrow \R$ such that $\mathbb{V}(t)\in \mathbb{W}(t)$ for a.e. $t\in [0,T]$.
\item $\gamma$ is a solution to (\ref{000eqn3}) in the Carath\'eodory sense, if $\gamma\in AC[0,T]$, $\gamma(0)=\gamma_0$ and the first equation in (\ref{000eqn3}) is fulfilled by $\gamma$ a.e. on $[0,T]$.
\end{enumerate}
\label{000def2}\end{definition}
\begin{remark}
The above definition remains correct also for a wider class of functions $\mathbb{W}$, still since in assumption (\ref{asm3c}) we put a boundedness constraint for $W_j$ appearing in (\ref{eqnmain}) it appears convenient to focus on the case of $\mathbb{W}\in L^{\infty}(0,T)$.
\end{remark}

If given functions $W_j$, $j=1,\ldots,m$ satisfy the assumption (\ref{asm3}) and $u\in C(Q_T)$, then for $\mathbb{W}_j=W_j(t,u(x_1^*,t),\ldots,u(x_n^*,t))$  entering into the right-hand side of (\ref{000eqn2}) there exists a solution in the sense of Definition \ref{000def2} (see Theorem \ref{th5}) what allows us to define, for every $\mathbb{W}_j$, $j=1,\ldots,m$, a multivalued mapping
\begin{equation}
\mathfrak{t}_j:\ C(Q_T)\ \longrightarrow\ 2^{AC[0,T]}
\label{000eqn1.4}\end{equation}
which assigns the set of solutions of (\ref{000eqn2}) to given $u\in C(Q_T)$.

\begin{proposition}
Let $\mathbb{W}$ be given such that a solution to the equation (\ref{000eqn2}) (in the sense of Definition \ref{000def2}) exists. Then $\gamma$ is a solution to (\ref{000eqn2}) if and only if
\[ \gamma(t) = e^{-\frac{1}{\beta}t}\gamma(0)\ +\ \frac{1}{\beta}\int_0^t e^{-\frac{1}{\beta}(t-s) }\,\mathbb{V}(s)\,ds\]
for some integrable function $\mathbb{V}:[0,T]\longrightarrow R$ such that $\mathbb{V}(t)\in \mathbb{W}(t)$ for a.e. $t\in [0,T]$.
\label{000le1}\end{proposition}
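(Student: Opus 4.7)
The plan is to unfold Definition \ref{000def2} and reduce the statement to a standard equivalence for a linear first-order ODE in Carath\'eodory form. By Definition \ref{000def2}(a), $\gamma$ solves the inclusion (\ref{000eqn2}) if and only if there exists an integrable selection $\mathbb{V}:[0,T]\to\R$ of $\mathbb{W}$ such that $\gamma\in AC[0,T]$, $\gamma(0)=\gamma_0$, and the equality $\beta\dot\gamma(t)+\gamma(t)=\mathbb{V}(t)$ holds for a.e.\ $t\in [0,T]$. Thus it suffices to prove: a function $\gamma\in AC[0,T]$ with $\gamma(0)=\gamma_0$ satisfies this a.e.\ equality if and only if it admits the Duhamel representation in the statement.

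For the forward implication I would use the integrating factor $\mu(t)=e^{t/\beta}$. Since $\gamma$ is absolutely continuous on $[0,T]$ and $\mu$ is $C^\infty$, the product $\mu\gamma$ is absolutely continuous as well, and by the product rule (valid a.e.\ for absolutely continuous functions) its a.e.\ derivative equals $\mu(t)\bigl(\dot\gamma(t)+\frac{1}{\beta}\gamma(t)\bigr)=\frac{1}{\beta}\mu(t)\mathbb{V}(t)$, where the ODE was used. The right-hand side is integrable on $[0,T]$ because $\mu$ is bounded there and $\mathbb{V}\in L^1(0,T)$. Integrating the identity from $0$ to $t$ and using the fundamental theorem of calculus for absolutely continuous functions yields $e^{t/\beta}\gamma(t)-\gamma(0)=\frac{1}{\beta}\int_0^t e^{s/\beta}\mathbb{V}(s)\,ds$, and multiplying by $e^{-t/\beta}$ gives exactly the claimed formula.

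For the converse, assume $\gamma$ is defined by the Duhamel formula with some integrable selection $\mathbb{V}$ of $\mathbb{W}$. The function $t\mapsto \int_0^t e^{s/\beta}\mathbb{V}(s)\,ds$ is absolutely continuous on $[0,T]$ as an indefinite Lebesgue integral of an integrable function, and multiplication by the smooth factor $e^{-t/\beta}$ preserves absolute continuity; similarly the free term $e^{-t/\beta}\gamma_0$ is smooth. Hence $\gamma\in AC[0,T]$, and plugging $t=0$ into the formula gives $\gamma(0)=\gamma_0$. Differentiating a.e.\ (using the Lebesgue differentiation theorem for the integral part and the ordinary chain rule for the exponential prefactor) one finds $\dot\gamma(t)=-\frac{1}{\beta}\gamma(t)+\frac{1}{\beta}\mathbb{V}(t)$ for a.e.\ $t$, which is the required ODE.

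I do not expect any genuine obstacle: the argument is a textbook integrating-factor computation adapted to the Carath\'eodory setting, and the only mildly delicate points are justifying the product rule for $e^{t/\beta}\gamma(t)$ and the termwise differentiation of the Duhamel integral, both of which are routine consequences of $\gamma\in AC[0,T]$ and $\mathbb{V}\in L^1(0,T)$ together with the boundedness of $e^{\pm t/\beta}$ on the compact interval $[0,T]$.
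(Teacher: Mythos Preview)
Your argument is correct: unfolding Definition~\ref{000def2} reduces the claim to the standard equivalence between Carath\'eodory solutions of the linear scalar ODE $\beta\dot\gamma+\gamma=\mathbb{V}$ and the variation-of-constants formula, and your integrating-factor computation handles both implications cleanly, with the absolute continuity and a.e.\ differentiation steps properly justified.

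The paper itself does not supply a proof of this proposition; it is stated and then used without argument, evidently regarded as elementary. Your write-up is exactly the routine verification one would expect, so there is nothing to compare against and no gap to flag.
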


We are now ready for formulating the following definition:
\begin{definition}
Suppose that $\Omega$, $f$, $g$, $W_j$, $u_0$, $a_j$ and $\beta_j$ satisfy assumptions (\ref{asm1}) - (\ref{asm7}). We say that a couple $(u,\kappa)\in C(Q_T)\times [AC[0,T]]^m$ is a solution to system (\ref{eqnmain}) if and only if
\begin{enumerate}[a)]
\item $u=\mathfrak{u}(\kappa)$, i.e. $u$ is a weak solution of system (\ref{000eqn1}) in the sense of Definition~\ref{000def1}, corresponding to $\kappa$,
\item $\kappa_j\in\mathfrak{t}_j(u)$ for $j=1,\ldots,m$, i.e. $\kappa_j$ is a solution in the sense of Definition~\ref{000def2} of (\ref{000eqn2}) with $W_j(t,u(x_1^*,t),\ldots,u(x_p^*,t))$ entering into the right-hand side, or equivalently (by Proposition \ref{000le1})
\[ \kappa_j(t) = e^{-\frac{1}{\beta_j}t}\kappa_j(0)\ +\ \frac{1}{\beta_j}\int_0^t e^{-\frac{1}{\beta_j}(t-s) }\,v_j(s)\,ds\]
for some measurable function $v_j:[0,T]\longrightarrow R$, such that \[v_j(t)\in W_j(t,u(x_1^*,t),\ldots,u(x_p^*,t))\] for a.e. $t\in [0,T]$ for $j=1,\ldots,m$.
\end{enumerate}
\label{000def3}\end{definition}

We conclude this section with the main result of our paper:
\begin{theorem}
Suppose that assumptions as stated in Definition \ref{000def3} are fulfilled. Then there exists a solution to system (\ref{eqnmain}) in sense of this definition.
\label{thmain}\end{theorem}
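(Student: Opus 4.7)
The proof follows the strategy announced in the introduction: reformulate system (\ref{eqnmain}) as a fixed-point problem for a multivalued operator on the compact convex set $M_S\subset K$, and apply a generalized Kakutani (Ky Fan/Glicksberg) theorem. Define
\[
 \mathcal{T}:M_S\longrightarrow 2^{K},\qquad \mathcal{T}(\kappa)=\bigl(\mathfrak{t}_1(\mathfrak{u}(\kappa)),\ldots,\mathfrak{t}_m(\mathfrak{u}(\kappa))\bigr),
\]
i.e.\ $\mathcal{T}(\kappa)$ is the set of tuples $(\gamma_1,\ldots,\gamma_m)$ such that, writing $u=\mathfrak{u}(\kappa)$, each $\gamma_j$ solves the ordinary differential inclusion (\ref{000eqn2}) with $\mathbb{W}_j(t)=W_j(t,u(x_1^*,t),\ldots,u(x_n^*,t))$. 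A fixed point $\kappa\in\mathcal{T}(\kappa)$ immediately yields, together with $u=\mathfrak{u}(\kappa)$, a solution to (\ref{eqnmain}) in the sense of Definition~\ref{000def3}.

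First I would verify the self-map property $\mathcal{T}(M_S)\subset M_S$. Using the representation of Proposition~\ref{000le1} and the boundedness $|v_j(t)|\le C_j$ from (\ref{asm3c}), a direct estimate gives $\|\gamma_j\|_{C[0,T]}\le|a_j|+C_j$; from $\beta_j\dot\gamma_j=v_j-\gamma_j$ one then obtains $\|\dot\gamma_j\|_{L^\infty(0,T)}\le(|a_j|+2C_j)/\beta_j$, and the constant $S$ in (\ref{asm4a}) is precisely tailored so that these bounds imply $(\gamma_1,\ldots,\gamma_m)\in M_S$. Next, the values $\mathcal{T}(\kappa)$ are nonempty by the existence result referenced as Theorem~\ref{th5}, and they are convex because the set of measurable selections $v_j(t)\in W_j(t,\cdot)$ is convex (by assumption (\ref{asm3a})) and the integral representation in Proposition~\ref{000le1} is affine in $v_j$. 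Closedness of $\mathcal{T}(\kappa)$ in $K$ follows from the same representation together with weak-$*$ compactness of bounded selections in $L^\infty(0,T)$ and the closedness property of upper semicontinuous multivalued maps from (\ref{asm3b}).

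The main technical step, and the one I expect to be the hardest, is to show that $\mathcal{T}$ is upper semicontinuous on $M_S$; equivalently (since $M_S$ is metric and compact) that its graph is sequentially closed. This requires two ingredients. The first is continuity of the single-valued map $\mathfrak{u}:K\to C(Q_T)$ when restricted to $M_S$: given $\kappa^k\to\kappa$ in $K$, assumption (\ref{asm5b}) controls $g(\,\cdot\,,\,\cdot\,;\kappa^k)-g(\,\cdot\,,\,\cdot\,;\kappa)$ in $L^1(Q_T)$, and combining this with the regularity and uniqueness result from Theorem~\ref{th1} and the $C(Q_T)$-regularity result (Theorem~\ref{th3}; here assumption (\ref{asm4b}) on the $L^q(L^p)$ exponents is used together with (\ref{asm5c})) yields $\mathfrak{u}(\kappa^k)\to\mathfrak{u}(\kappa)$ uniformly on $Q_T$. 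The second ingredient is upper semicontinuity of $\mathfrak{t}_j$: if $u^k\to u$ uniformly and $\gamma_j^k\in\mathfrak{t}_j(u^k)$ with $\gamma_j^k\to\gamma_j$ in $K$, one writes $\gamma_j^k$ via Proposition~\ref{000le1} with selections $v_j^k$ bounded by $C_j$, extracts a weak-$*$ limit $v_j$, and uses the upper semicontinuity (\ref{asm3b}) together with the pointwise continuity of $u^k(x_i^*,\cdot)\to u(x_i^*,\cdot)$ and a Mazur/convex-hull argument (Aubin--Cellina type) to conclude $v_j(t)\in W_j(t,u(x_1^*,t),\ldots,u(x_n^*,t))$ a.e., and hence $\gamma_j\in\mathfrak{t}_j(u)$.

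With $M_S$ being a nonempty convex compact subset of the Banach space $K$ (cf.\ the remark after (\ref{000eqn1.2})), $\mathcal{T}:M_S\to 2^{M_S}$ having nonempty convex closed values and being upper semicontinuous (equivalently, closed-graph into the compact set $M_S$), the Kakutani--Ky Fan--Glicksberg fixed point theorem produces $\kappa^*\in\mathcal{T}(\kappa^*)$. Setting $u^*=\mathfrak{u}(\kappa^*)$ then gives the desired solution $(u^*,\kappa^*)$ to system (\ref{eqnmain}).
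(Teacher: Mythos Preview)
Your approach coincides with the paper's: a Kakutani-type fixed point on the compact convex set $M_S\subset K$, with the self-map property coming from the explicit constant $S$ in assumption~(\ref{asm4a}), and nonemptiness/convexity/closedness of values verified via the affine integral representation of Proposition~\ref{000le1} together with the convex-valued upper semicontinuity of the $W_j$. The paper packages this as a composition $\mathbf{P}\circ\mathbf{Q}\circ\mathbf{R}$ with abstract intermediate hypotheses (Hypotheses~\ref{as1}--\ref{as4}), while you work directly with $\mathcal{T}$; that is purely organizational.

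One step in your sketch is stronger than what the paper actually proves and deserves more care. You claim that $\kappa^k\to\kappa$ in $K$ implies $\mathfrak{u}(\kappa^k)\to\mathfrak{u}(\kappa)$ uniformly on $Q_T$, citing only assumption~(\ref{asm5b}) and Theorems~\ref{th1},~\ref{th3}. The paper does not establish uniform convergence; it introduces the weaker pointwise notion~(\ref{a6}) and verifies only that, along a subsequence. The route is: Theorem~\ref{th2} --- the $L^1$ stability estimate, which you do not invoke --- combined with~(\ref{asm5b}) gives $\mathfrak{u}(\kappa^k)\to\mathfrak{u}(\kappa)$ in $L^1(Q_T)$; one extracts an a.e.\ convergent subsequence and then, near each $(x_k^*,t)$, picks a nearby point where a.e.\ convergence holds and bridges back via the uniform \emph{interior} H\"older bound of Theorem~\ref{th3}\,\ref{th3c}). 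Your uniform-convergence claim can be salvaged on compact subsets of $\mathrm{int}\,Q_T$ by Arzel\`a--Ascoli (using $L^1$ convergence to identify the limit plus the uniform H\"older bound for equicontinuity), and that is enough since the $x_k^*$ lie in $\mathrm{int}\,\Omega$; but ``uniformly on $Q_T$'' is too strong because Theorem~\ref{th3} explicitly excludes the parabolic boundary. Either way, Theorem~\ref{th2} is the missing ingredient in your continuity step and should be cited explicitly.
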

The proof is given in Section~\ref{theproof}.

\section{Construction of the closed-loop control system}\label{construction}\setcounter{equation}{0}
As mentioned in Introduction, we assume that the control of the diffusion-reaction process (\ref{eqnmain}) is performed via a finite number of control devices localized in the interior of the process domain. The control action is based on observation data at finite set of measurement units as compared to given reference patterns.

Suppose that a reaction-diffusion process governed by (\ref{eqnrd}) with homogeneous Neumann boundary conditions and prescribed initial condition evolves over $\Omega$. The control action for this process is then described by an additive \emph{control term} (denote it by $g$) to the reaction-diffusion equation assuming so the form (\ref{000eqn1}).

Let the \emph{control term} be set as $g=g(x,t,\xi)$, where $\xi\in\R^m$ is an $m$-dimensional parameter. Moreover if, for any admissible function $\kappa:[0,T]\longrightarrow \R^m$, the term $g(x,t,\kappa(t))$ is sufficiently regular, then any such $\kappa$ can be considered as a \emph{control} applied to the system (\ref{000eqn1}) and it determines a unique solution $u$ which represents the \emph{response} of the system to $\kappa$.

To be more specific, it is our aim to keep the state of the system ,,as close as possible'' to a prescribed function $\tilde{u}:\Omega\longrightarrow \R$ which assumes values $u_k^* := \tilde{u}(x_k^*)$ at the measurement points.

Let us expose more on the above idea of measurement points and control devices. The control $\kappa$ shall be synthesized by $m$ control devices located in $\textrm{int}\,\Omega$ which have an ability to drive the thermodynamic process in the entire domain: each of $m$ components of $\kappa$ is considered to be an input signal for one of those $m$ devices. We implement the above concept by assuming the control term $g$ in (\ref{000eqn1}) to admit the representation
\begin{equation}
g(x,t,\kappa(t)) = \sum_{j=1}^m g_j(x,t)\kappa_j(t)\quad \textrm{for $(x,t)\in Q_T$}\,,
\label{000eqn5.1}\end{equation}
where $g_j$ are prescribed nonnegative functions on $Q_T$. Each of the functions $g_j$ describes a control device, $\kappa=(\kappa_1,\ldots,\kappa_m)$ and $\kappa_j$ is the signal which specifies actions of $j$-th device. $\kappa_j$ are described as the solutions of
\begin{equation}
\left\{\begin{array}{lrl}
\beta_j \dot{\kappa}_j(t) + \kappa_j(t)=v_j(t)& \textrm{on}&[0,T]\\
\kappa_j(0)=a_j & & 
\end{array}\right.
\label{000eqn5.2}\end{equation}
for $j=1,\ldots,m$, where $\beta_j$ are positive constants characterizing the $j$-th device and $v_j:[0,T]\longrightarrow\R$ can be considered as an external power supply term for $j$-th device. 

Now let us assume that for controlling the behaviour of system (\ref{000eqn1}) we use a control $\kappa$ which itself depends on a solution $u$ of (\ref{000eqn1}). To be more specific, we postulate that the latter dependence is included in the power supplies $v_j$ and admits the representation
\begin{equation}
v_j(t) = \sum_{k=1}^n \alpha_{jk}(t) w_k(u(x_k^*,t) - u_k^*)\quad \textrm{on $[0,T]$}
\label{000eqn5.3}\end{equation}
for $j=1,\ldots,m$, $k=1,\ldots,n$, where $\alpha_{jk}$ satisfies:
\begin{equation}
\begin{array}{l}
\alpha_{jk}\in C[0,T]\\
\alpha_{jk}(t)\geq 0\quad\textrm{on $[0,T]$}\\
\sum_{k=1}^n \alpha_{jk}(t)=1\quad\textrm{on $[0,T]$ for $j=1,\ldots,m$}
\end{array} \,.
\label{000eqn5.4}\end{equation}
Here $v_j$ are convex combinations of $w_k(u(x_k^*,t) - u_k^*))$ with $t$-dependent weights. It remains to define the functions $w_k$. Through many concepts concerning $w_k$, the simplest possible idea, also of practical relevance, is alternating their values between $-1$ and $+1$, depending on the sign of $u-u_k^*$ at the control units $x_k^*$:
\begin{equation}
w_k(r)=
\left\{\begin{array}{rl}
-1& \quad\textrm{for $r>0$}\\
0& \quad\textrm{for $r=0$}\\
+1& \quad\textrm{for $r<0$}
\end{array}\right.\label{000eqn5.5}
\end{equation}
Taking into account (\ref{000eqn5.1}) - (\ref{000eqn5.5}), we obtain a new system of differential equations with the unknown the couple $(u,\kappa)$:
\begin{equation}
\left\{\begin{array}{lrl}
u_t(x,t) - \Delta u(x,t) = f(u(x,t)) + \sum_{j=1}^m g_j(x,t)\kappa_j(t)& \textrm{on}&Q_T\\
\beta_1 \dot{\kappa}_1(t) + \kappa_1(t)=\sum_{k=1}^n \alpha_{1k}(t) w_k(u(x_k^*,t) - u_k^*)& \textrm{on}&[0,T]\\
\vdots &\vdots&\\
\beta_m \dot{\kappa}_m(t) + \kappa_m(t)=\sum_{k=1}^n \alpha_{mk}(t) w_k(u(x_k^*,t) - u_k^*)& \textrm{on}&[0,T]
\end{array}\right.\,,
\label{000eqnexample1}
\end{equation}
subject to suitable boundary and initial conditions. This is an algebraic system, in the form resembling (\ref{eqnmain}). Still, the proof of Theorem \ref{thmain} presented in Section~\ref{theproof} shows that assumption (\ref{asm3b}) on the upper semicontinuity of $W_j$ entering into (\ref{eqnmain}) is essential. Hence our existence result does not apply directly to (\ref{000eqnexample1}) where $\sum_{k=1}^n \alpha_{jk}(t) w_k(u(x_k^*,t) - u_k^*)$ stands for $W_j$ and $w_k$ are as in (\ref{000eqn5.5}). This suggests an idea of replacing $w_k$ with their multivalued convexifications $\tilde{w}_k$ which are upper semicontinuous:
\begin{displaymath}
\tilde{w}_k(r)=
\left\{\begin{array}{rl}
-1& \quad\textrm{for $r>0$}\\
\left[ -1,+1\right]& \quad\textrm{for $r=0$}\\
+1& \quad\textrm{for $r<0$}
\end{array}\right.
\end{displaymath}
and, accordingly, replacing the ordinary differential equations in (\ref{000eqnexample1}) by the differential inclusions:
\begin{displaymath}
\beta_j \dot{\kappa}_j(t) + \kappa_j(t)\in\sum_{k=1}^n \alpha_{jk}(t)\tilde{w}_k(u(x_k^*,t) - u_k^*)
\end{displaymath}
on $[0,T]$ for $j=1,\ldots,m$. 

Now note that assumption (\ref{asm3}) is fulfilled for $W_j=\sum_{k=1}^n \alpha_{jk}(t)\tilde{w}_k(u(x_k^*,t) - u_k^*)$ and, after ensuring that the rest of assumptions (\ref{asm1}) - (\ref{asm7}) is satisfied, including the choice of $g_j$ such that $g$ satisfies assumption (\ref{asm5}), Theorem \ref{thmain} becomes applicable. This justifies the use of the differential inclusions in (\ref{eqnmain}), since it seems natural to expect that the case of the switching control law inscribed in $W_j$ will be taken into account.

\section{The proof of the existence theorem}\label{theproof}\setcounter{equation}{0}
Let us now proceed to the proof of Theorem \ref{thmain}. The proof will exploit a fixed point argument. More precisely, it will be shown that if a certain multivalued operator on a Banach space has a fixed point (see Definition \ref{000defap3}), then this fixed point determines a solution of problem (\ref{eqnmain}) in the sense of Definition \ref{000def3} (see Subsection \ref{proof1}). Next we will show that the discussed operator indeed admits at least one fixed point (see Subsections \ref{proof2} and \ref{proof3}). 

\subsection{Construction of the fixed-point operator}\label{proof1}
This subsection is devoted to constructing the suitable multivalued operator whose fixed points determine the solutions of problem (\ref{eqnmain}) in the sense of Definition \ref{000def3}.

Let $K$ be defined as in (\ref{000eqn1.1}), $M_R$ be defined as in (\ref{000eqn1.2}), $x_k^*$, $k=1,\ldots,n$, meet assumption (\ref{asm1.1}) and $S$ fit assumption (\ref{asm4a}). For every $\xi\in M_S$, the solution $u=\mathfrak{u}(\xi)$ is continuous on $Q_T$ (where $\mathfrak{u}$ is the operator defined in (\ref{000eqn1.3})), hence the functions $u(x_k^*,\,.\,)$, $k=1,\ldots,n$ are well-defined continuous functions on $(0,T)$. Therefore the operator
\begin{eqnarray}
&&\mathbf{R}:M_S \longrightarrow \Ce{\R^n}\nonumber\\
&&\mathbf{R}(\xi) = \left( \mathfrak{u}(\xi)(x_1^*,\,.\,),\ldots,\mathfrak{u}(\xi)(x_n^*,\,.\,)\right)\nonumber
\end{eqnarray}
is well-defined. Referring to the ideas introduced in Section~\ref{construction}, operator $\mathbf{R}$ incorporates the whole information about the process at given control points. 

Denote $W=(W_1,\ldots,W_m)$. We define a multivalued operator $\mathbf{Q}:\Ce{\R^n}\longrightarrow 2^{\eLp{\infty}{\R^m}}$ as follows:
\begin{eqnarray}
&&\textrm{for given $f\in \Ce{\R^n}$, any $v\in\eLp{\infty}{\R^m}$ satisfies $v\in\mathbf{Q}(f)$}\nonumber\\
&&\textrm{if and only if $v(t)\in W(t,f(t))$ a.e. on $(0,T)$}\,.\nonumber
\end{eqnarray}
By assumption (\ref{asm3c}), each $W_j$ is bounded with some finite constant $C_j>0$. Every $v=(v_1,\ldots,v_m)\in\eLp{\infty}{\R^m}$ that belongs to $\bigcup Im(\mathbf{Q})$ satisfies $\xnorm{v_j}{L^\infty(0,T)}\leq C_j$ for $j=1,\ldots,m$. Denote $C=(C_1,\ldots,C_m)$ and
\[ \widetilde{M}_C = \left\{v= (v_1,\ldots,v_m)\in\eLp{\infty}{\R^m}:\ \xnorm{v_j}{L^\infty(0,T)}\leq C_j\ \textrm{for $j=1,\ldots,m$}\right\} \,.\]
The set $\widetilde{M}_C$ is bounded in $\eLp{\infty}{\R^m}$. As in Section~\ref{construction}, the operator $\mathbf{Q}$ determines a control law which assigns the power to be supplied for the control devices to information about the process collected in measurement points $x_k^*$, $k=1,\ldots,n$.

The values $v_j$ determine responses of the control devices $\kappa_j$, $j=1,\ldots,m$ described by ordinary differential equations 
\begin{displaymath}
\left\{\begin{array}{lrl}
\beta_j \dot{\kappa}_j(t) + \kappa_j(t)=v_j(t)& \textrm{on}&[0,T]\\
\kappa_j(0)=a_j & & 
\end{array}\right. \,,
\end{displaymath}
the latter having solutions (in the Carath\'eodory sense) given by the integral formula introduced in Proposition \ref{000le1} with $v_j$ instead of $\mathbb{V}$: 
\begin{equation}
\kappa_j(t)\ =\ e^{-\frac{1}{\beta_j}t}a_j\ +\ \frac{1}{\beta_j}e^{-t/\beta_j}\int_0^t e^{s/\beta_j}\,v_j(s)\,ds \,.
\label{000eqn2.1}\end{equation}
This formula sets continuous functions well-defined on $[0,T]$ for any $v_j\in\eLp{\infty}{\R^m}$; thus we can define an operator $\mathbf{P}:\eLp{\infty}{\R^q}\longrightarrow K$ with $\left(\mathbf{P}(v)\right)_j$ given by (\ref{000eqn2.1}) for $j=1,\ldots,m$. As in Section~\ref{construction}, the operator $\mathbf{P}$ assigns an input signal for control devices corresponding to the supplied amounts of power. 

Moreover, the solutions $\kappa_j$ are absolutely continuous and bounded, with bounded derivatives, hence $\kappa_j\in W^{1,\infty}(0,T)$. We can also conclude the following estimates for the values of $\kappa_j$ and $\dot{\kappa}_j$ under assumption that $v\in \widetilde{M}_C$:
\begin{eqnarray*}
\kappa_j(t) &=& e^{-\frac{1}{\beta_j}t}a_j + \frac{1}{\beta_j}e^{-t/\beta_j}\int_0^t e^{s/\beta_j}\,v_j(s)\,ds\ \leq\ \hnorm{a_j} + \frac{C_j}{\beta_j}e^{-t/\beta_j}\int_0^t e^{s/\beta_j}\,ds \\
&=& \hnorm{a_j} + \frac{C_j}{\beta_j}e^{-t/\beta_j} \left[ \beta_j e^{s/\beta_j}\right]_{s=0}^t\ =\ \hnorm{a_j} + \frac{C_j}{\beta_j}e^{-t/\beta_j} \left( \beta_j e^{t/\beta_j} - \beta_j\right)\\
&=& \hnorm{a_j} + C_j\left(1 - e^{-t/\beta_j}\right)\ \leq\ \hnorm{a_j} + C_j
\end{eqnarray*}
for $j=1,\ldots,m$. In the same way we show that $\kappa_j(t)\geq -\hnorm{a_j}-C_j$. For estimating $\dot{\kappa}_j$, $j=1,\ldots,m$ we proceed as follows:
\[-C_j\ \leq\quad \beta_j \dot{\kappa}_j + \kappa_j = v_j\quad \leq\ C_j \,,\]
thus
\[ \dot{\kappa}_j\ \leq\ \frac{C_j}{\beta_j} - \frac{\kappa_j}{\beta_j}\ \leq\ \frac{\hnorm{a_j}+2 C_j}{\beta_j}\]
since $\kappa_j(t)\in \left[-\hnorm{a_j} - C_j,\hnorm{a_j} + C_j\right]$. Similarly we obtain $\dot{\kappa}_j \geq -\frac{\hnorm{a_j}+2 C_j}{\beta_j}$. Altogether for all $j=1,\ldots,m$ we have
\[ \xnorm{\kappa_j}{W^{1,\infty}(0,T)}\ \leq\ \max_{j=1,\ldots,q}\left\{\hnorm{a_j} + C_j + \frac{\hnorm{a_j}+2}{\beta_j}\right\} \,.\]
\begin{coro}
For every $v\in \widetilde{M}_C$, $\mathbf{P}(v)\in M_S$ where $S$ is as in the assumption (\ref{asm4a}).
\end{coro}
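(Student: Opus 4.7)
The corollary is essentially a bookkeeping statement: all the necessary pointwise estimates for $\kappa = \mathbf{P}(v)$ and $\dot{\kappa}$ have been derived in the paragraphs immediately preceding, so the plan is simply to assemble those bounds and check that they produce exactly the quantity $S$ from assumption (\ref{asm4a}).

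The plan is the following. Fix an arbitrary $v = (v_1,\ldots,v_m) \in \widetilde{M}_C$, so that $\|v_j\|_{L^\infty(0,T)} \leq C_j$ for each $j=1,\ldots,m$. Set $\kappa = \mathbf{P}(v)$, whose components are defined componentwise by the integral formula (\ref{000eqn2.1}). I would first argue that $\kappa_j \in W^{1,\infty}(0,T)$ for each $j$: the integral representation makes $\kappa_j$ the sum of a smooth function and an absolutely continuous function of $t$ (since $v_j$ is bounded and measurable on $[0,T]$), hence $\kappa_j \in AC[0,T]$; differentiating (\ref{000eqn2.1}) one recovers the ODE $\beta_j \dot{\kappa}_j + \kappa_j = v_j$ a.e., which shows that the weak derivative $\dot{\kappa}_j$ is essentially bounded.

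Next I would invoke the two estimates already obtained just above the corollary: the pointwise bound $|\kappa_j(t)| \leq \hnorm{a_j} + C_j$ for every $t \in [0,T]$, and the essential-supremum bound $|\dot{\kappa}_j(t)| \leq \frac{\hnorm{a_j}+2C_j}{\beta_j}$ derived from the relation $\beta_j \dot{\kappa}_j = v_j - \kappa_j$ together with the preceding $L^\infty$ bound on $\kappa_j$. Adding these two estimates gives
\[
\xnorm{\kappa_j}{W^{1,\infty}(0,T)} \;\leq\; \hnorm{a_j} + C_j + \frac{\hnorm{a_j}+2C_j}{\beta_j},
\]
and taking the maximum over $j=1,\ldots,m$ produces precisely the constant $S$ of assumption (\ref{asm4a}). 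Hence $\kappa = \mathbf{P}(v) \in M_S$ by the very definition (\ref{000eqn1.2}) of the set $M_S$.

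I do not anticipate any genuine obstacle, as the two main pointwise estimates have already been carried out explicitly in the text. The only point requiring some attention is purely notational, namely matching the constant emerging from the estimates with the exact form of $S$ prescribed in (\ref{asm4a}); this is a direct comparison and does not need any additional analytic input.
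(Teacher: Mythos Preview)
Your proposal is correct and mirrors exactly what the paper does: the corollary is stated without a separate proof, as it follows immediately from the pointwise bounds on $\kappa_j$ and $\dot{\kappa}_j$ derived in the lines just above. Your remark about the notational mismatch is apt---the paper's displayed bound and the definition of $S$ in assumption~(\ref{asm4a}) carry $\frac{|a_j|+2}{\beta_j}$ where the derivation actually yields $\frac{|a_j|+2C_j}{\beta_j}$, which appears to be a typographical slip in the paper rather than a gap in the argument.
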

Actually, the above corollary justifies the choice of the constant $S$ introduced in assumption (\ref{asm4a}).

To conclude, we have defined the following operators:
\begin{equation}
\begin{array}{lll}&&\mathbf{R}:M_S \longrightarrow \Ce{\R^n}\\
&&\mathbf{Q}:\Ce{\R^n}\longrightarrow 2^{\eLp{\infty}{\R^m}}\\
&&\mathbf{P}:\eLp{\infty}{\R^m}\longrightarrow K
\end{array}
\label{000eqn2.2}\end{equation}
such that
\begin{displaymath}
\begin{array}{lll}&&\bigcup Im(\mathbf{Q})\subseteq \widetilde{M}_C\\
&&\mathbf{P}|_{\widetilde{M}_C}:\widetilde{M}_C\longrightarrow M_S\,,
\end{array}
\end{displaymath}
thus the superposition $\mathbf{P}\circ\mathbf{Q}\circ\mathbf{R}:M_S\longrightarrow 2^{M_S}$ is a well-defined multivalued operator.

Note that by definitions of the operators $\mathbf{P}$, $\mathbf{Q}$ and $\mathbf{R}$, the condition $\kappa\in\mathbf{P}\circ\mathbf{Q}\circ\mathbf{R}(\xi)$ can be rewritten to the form:
\[ \kappa_j(t) = e^{-\frac{1}{\beta_j}t}\kappa_j(0)\ +\ \frac{1}{\beta_j}\int_0^t e^{-\frac{1}{\beta_j}(t-s) }\,v_j(s)\,ds\qquad\textrm{for}\ j=1,\ldots,m\]
for some measurable function $v_j:[0,T]\longrightarrow R$ such that $v_j(t)\in W_j(t,\mathbf{R}(\xi))$ for a.e. $t\in [0,T]$, where $W_j(t,\mathbf{R}(\xi))=W_j(t,u(x_1^*,t),\ldots,u(x_n^*,t))$ on $[0,T]$ for $u=\mathfrak{u}(\xi)$. Equivalently, in more compact form: $\kappa_j\in\mathfrak{t}_j(u)$ for $u=\mathfrak{u}(\xi)$ and $j=1,\ldots,m$ (where $\mathfrak{t}_j$ are the operators defined in (\ref{000eqn1.4})).

This brings us to the conclusion that for $\xi=\kappa$ the condition $\kappa\in\mathbf{P}\circ\mathbf{Q}\circ\mathbf{R}(\xi)$ is equivalent to the hypothesis included in Definition \ref{000def3}. Thus, we can reformulate the definition of solutions to system (\ref{eqnmain}) by introducting the following equivalent:
\begin{definition}
Suppose that $\Omega$, $f$, $g$, $W_j$, $u_0$, $a_j$ and $\beta_j$ are obeying general assumptions (\ref{asm1}) - (\ref{asm7}). We say that a couple $(u,\kappa)\in C(Q_T)\times AC([0,T];\R^m)$ is called a solution to system (\ref{eqnmain}) if and only if
\begin{enumerate}[a)]
\item $u=\mathfrak{u}(\kappa)$, i.e. $u$ is a weak solution of system (\ref{000eqn1}) in the sense of Definition~\ref{000def1}, corresponding to $\kappa$,
\item $\kappa\in\mathbf{P}\circ\mathbf{Q}\circ\mathbf{R}(\kappa)$.
\end{enumerate}
\label{000def4}\end{definition}
The latter definition allows us to conclude that if $\kappa$ is a fixed point of the operator $\mathbf{P}\circ\mathbf{Q}\circ\mathbf{R}$, i.e. $\kappa\in\mathbf{P}\circ\mathbf{Q}\circ\mathbf{R}(\kappa)$, then the couple $(\mathfrak{u}(\kappa),\kappa)$ is a solution of (\ref{eqnmain}), hence the existence of solutions (\ref{eqnmain}) will be assured once we prove the existence of at least one fixed point of $\mathbf{P}\circ\mathbf{Q}\circ\mathbf{R}$. The existence of the fixed points will be shown in the next two subsections. 

\begin{remark}
By the construction of operator $\mathbf{P}\circ\mathbf{Q}\circ\mathbf{R}$ we conclude that solutions $(u,\kappa)$ to system (\ref{eqnmain}) belong not only to $C(Q_T)\times AC([0,T];\R^m)$ but also to $C(Q_T)\times \Wunp{1,\infty}{\R^m}$.
\end{remark}

\subsection{The fixed-point argument}\label{proof2}
As already mentioned, we shall prove the existence of fixed points for the operator $\mathbf{P}\circ\mathbf{Q}\circ\mathbf{R}$ defined in Subsection \ref{proof1}. To this purpose, we will employ a certain generalization of the Kakutani fixed point theorem --- we will take three abstract operators $\mathcal{P}$, $\mathcal{Q}$ and $\mathcal{R}$ (not to be confused with $\mathbf{P}$, $\mathbf{Q}$ and $\mathbf{R}$) acting between spaces given in (\ref{000eqn2.2}) and investigate their properties that facilitate an appropriate use of the generalized Kakutani theorem.

Once we give an answer to the above problem, we pass in Subsection \ref{proof3} to checking that operators $\mathbf{P}$, $\mathbf{Q}$ and $\mathbf{R}$ defined in Subsection \ref{proof1} indeed have got the required properties established for  $\mathcal{P}$, $\mathcal{Q}$ and $\mathcal{R}$.

To start our considerations, we recall the following generalized Kakutani fixed point theorem (\cite[Theorem~4]{bk}):
\begin{theorem}
Suppose that $X$ is a real Banach space and let $M\subset X$ be nonempty, convex and compact. Suppose also that the multivalued operator $T:M\longrightarrow 2^M$ satisfies the conditions:
\begin{enumerate}[a)]
\item $T$ has nonempty, closed and convex values (i.e $Tx$ has these properties for all $x\in M$),
\item $T$ has closed graph in $X\times X$.
\end{enumerate}
Then there exists an element $\bar{x}\in M$ such that $\bar{x}\in T\bar{x}$.
\label{thkakutani}\end{theorem}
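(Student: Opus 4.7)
The plan is to reduce the statement to the classical Brouwer fixed point theorem by approximating the multivalued map $T$ with continuous single-valued maps acting on finite-dimensional compact convex subsets of $M$. Compactness of $M$ will supply a finite subcover at every scale, a partition of unity subordinate to it will deliver the single-valued approximant, and the closed graph assumption combined with convexity of values will permit the limit passage.

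For each $n\in\N$, I would use compactness of $M$ to extract a finite cover $\{B(x_i^n,1/n)\}_{i=1}^{k_n}$ of $M$ by open balls with centers $x_i^n\in M$, and form the standard continuous partition of unity $\{\phi_i^n\}_{i=1}^{k_n}$ subordinate to this cover (e.g.\ by renormalising the functions $x\mapsto\max(1/n-\|x-x_i^n\|,0)$). Using nonemptiness of $T(x_i^n)$, pick $y_i^n\in T(x_i^n)\subset M$ and set
\[ T_n(x)\ =\ \sum_{i=1}^{k_n}\phi_i^n(x)\,y_i^n\,. \]
Then $T_n$ is continuous on $M$ and its image lies in the convex hull $C_n$ of the points $y_1^n,\ldots,y_{k_n}^n$; since $M$ is convex and each $y_i^n\in M$, we have $C_n\subset M$. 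Restricted to the finite-dimensional compact convex set $C_n$, $T_n$ sends $C_n$ into itself, so Brouwer's theorem produces $\bar x_n\in C_n\subset M$ with $\bar x_n=T_n(\bar x_n)$.

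By compactness of $M$ I would then extract a subsequence (unrelabelled) with $\bar x_n\to\bar x\in M$; it remains to show $\bar x\in T(\bar x)$. Because $T$ has closed graph and $M$ is compact, $T$ is upper semicontinuous in the norm topology: for every open $U\supset T(\bar x)$ there is a neighbourhood $V$ of $\bar x$ with $T(V)\subset U$ (the standard sequential argument: if not, pick $x_k\to\bar x$ and $y_k\in T(x_k)\setminus U$, use compactness of $M$ to extract $y_k\to y\notin U$, and invoke closed graph to get $y\in T(\bar x)\subset U$, a contradiction). Applying this to $U=T(\bar x)+B(0,\varepsilon)$, which is open since $T(\bar x)$ is closed, I obtain $\delta>0$ with $T(x)\subset T(\bar x)+B(0,\varepsilon)$ whenever $\|x-\bar x\|<\delta$. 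Choose $n$ so large that $\|\bar x_n-\bar x\|+1/n<\delta$; for every $i$ with $\phi_i^n(\bar x_n)>0$ one has $\|x_i^n-\bar x_n\|<1/n$ and hence $\|x_i^n-\bar x\|<\delta$, whence $y_i^n\in T(x_i^n)\subset T(\bar x)+B(0,\varepsilon)$. Since $T(\bar x)$ is convex, the set $T(\bar x)+B(0,\varepsilon)$ is convex too, so the convex combination $\bar x_n=\sum_i\phi_i^n(\bar x_n)y_i^n$ also belongs to it. Passing to the limit in $n$ gives $\bar x\in T(\bar x)+\overline{B(0,\varepsilon)}$, and since $\varepsilon>0$ is arbitrary and $T(\bar x)$ is closed, $\bar x\in T(\bar x)$.

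The main obstacle I foresee is the transition from the closed-graph assumption to the metric form of upper semicontinuity that powers the final estimate; this is where the compactness of the target space $M$ is really used, and where the convexity of each value $T(x)$ becomes indispensable, since it is precisely convexity of $T(\bar x)+B(0,\varepsilon)$ that lets us absorb an arbitrary convex combination of approximating values $y_i^n$. The remaining ingredients—partitions of unity on a compact metric space and Brouwer's theorem on a finite-dimensional simplex—are standard once the set-up is in place.
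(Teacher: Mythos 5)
Your argument is correct, but it is worth noting that the paper does not prove this theorem at all: it is quoted as a known result and attributed to Bohnenblust and Karlin (\cite[Theorem~4]{bk}), with the proof box standing for a citation rather than an argument. What you have written is essentially the classical proof of the Kakutani--Fan--Glicksberg/Bohnenblust--Karlin theorem, and all the steps check out: the partition-of-unity approximants $T_n$ map the finite-dimensional compact convex hull $C_n\subset M$ into itself, so Brouwer applies; the passage from the closed-graph hypothesis to upper semicontinuity correctly uses compactness of $M$ (a sequence $y_k\in T(x_k)\setminus U$ has a limit point, which the closed graph forces into $T(\bar x)\subset U$); and the final absorption step correctly exploits convexity of $T(\bar x)+B(0,\varepsilon)$ together with the fact that every $y_i^n$ actually contributing to $\bar x_n=T_n(\bar x_n)$ comes from a center $x_i^n$ within $1/n$ of $\bar x_n$. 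The concluding limit passage is also sound: $\bar x_n\in T(\bar x)+B(0,\varepsilon)$ for all large $n$ gives $\mathrm{dist}(\bar x,T(\bar x))\le\varepsilon$, and letting $\varepsilon\downarrow 0$ with $T(\bar x)$ closed yields $\bar x\in T(\bar x)$. What your route buys is a self-contained reduction to Brouwer, using only compactness, convexity of values, nonemptiness, and the closed graph --- exactly the listed hypotheses --- whereas the paper simply imports the statement as a black box; the only cosmetic gaps are the routine verifications that the convex hull of finitely many points is a compact subset of a finite-dimensional affine subspace (so that Brouwer's theorem in its standard convex-body form applies) and that closedness of the graph in $X\times X$ restricted to the metric space $M\times M$ coincides with sequential closedness, both of which are immediate.
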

\begin{flushright}$\Box$\end{flushright}
Before proceeding further, for a sequence $(f^i)_{i=1}^\infty$ of functions in $\Ce{\R^n}$ (denote $f^i=(f^i_1,\ldots,f^i_n)$) and a function $f\in\Ce{\R^n}$ (denote $f=(f_1,\ldots,f_n)$), we introduce following property which will be useful in the sequel:
\begin{eqnarray}
&&\textrm{for all $t\in (0,T),\ \delta>0$ there exists $i_0(t,\delta)\in\mathbb{N},\ h_0(\delta)\in(0,\delta]$ such that}\nonumber\\
&&\max_{1\leq k\leq n} \hnorm{f^i_k(t+h) - f_k(t)}<\delta\label{a6}\\
&&\textrm{for all $i\geq i_0(t,\delta), 0\leq\hnorm{h}\leq h_0(\delta), t+h\in(0,T)$} \,.\nonumber
\end{eqnarray}

Now, let $K$ be defined as in (\ref{000eqn1.1}) and $M_R$ be defined as in (\ref{000eqn1.2}). Suppose also that a multivalued operator $\mathcal{G}:M_R\rightarrow 2^{M_R}$ is of the form $\mathcal{G} = \mathcal{P}\circ\mathcal{Q}\circ\mathcal{R}$ where $\mathcal{P}$, $\mathcal{Q}$ and $\mathcal{P}$ satisfy:
\begin{assumption}
The operator $\mathcal{R}$ satisfies:
\begin{enumerate}[a)]
\item $\mathcal{R}:M_R\longrightarrow \Ce{\R^n}$,
\item \label{as1b}for any sequence $\kappa^i\rightarrow\kappa$ in $M_R$ with topology of $K$, the sequence $f^i=\mathcal{R}(\kappa^i)$ has a subsequence convergent to $f$ in sense of (\ref{a6}) with $f=\mathcal{R}(\kappa)$.
\end{enumerate}
\label{as1}\end{assumption}
\begin{assumption}
The multivalued operator $\mathcal{Q}$ satisfies:
\begin{enumerate}[a)]
\item \label{as2a}$\mathcal{Q}:\Ce{\R^n}\longrightarrow 2^{\eLp{\infty}{\R^m}}$,
\item \label{as2b}$\mathcal{Q}$ is closed in the following sense: if
\begin{enumerate}[i)]
\item $(f^i)_{i=1}^\infty$ in $\Ce{\R^n}$ converges to $f$ in sense of (\ref{a6}),
\item $v^i \stackrel{*}{\rightharpoonup} v$ in $\eLp{\infty}{\R^m}$,
\item $v^i\in\mathcal{Q}(f^i)$
\end{enumerate}
then $v\in\mathcal{Q}(f)$,
\item \label{as2c}$\mathcal{Q}$ is bounded, i.e. $\bigcup Im(\mathcal{Q})$ is a bounded subset of $\eLp{\infty}{\R^m}$, where $Im(\mathcal{Q}):=\mathcal{Q}\left(\Ce{\R^n}\right)$.
\end{enumerate}
\label{as2}\end{assumption}
\begin{assumption}
The operator $\mathcal{P}$ satisfies:
\begin{enumerate}[a)]
\item $\mathcal{P}:\eLp{\infty}{\R^m}\longrightarrow K$,
\item $\mathcal{P}|_{\bigcup Im(\mathcal{Q})}:\bigcup Im(\mathcal{Q})\longrightarrow M_R$,
\item \label{as3c}$\mathcal{P}|_{\bigcup Im(\mathcal{Q})}$ is closed in the following sense: if
\begin{enumerate}[i)]
\item $v^i \stackrel{*}{\rightharpoonup} v$ in $\bigcup Im(\mathcal{Q})$ with topology of $\eLp{\infty}{\R^q}$,
\item $\kappa^i\rightarrow \kappa$ in $M_R$ with topology of $K$,
\item $\kappa^i=\mathcal{P}|_{\bigcup Im(\mathcal{Q})}(v^i)$
\end{enumerate}
then $\kappa =\mathcal{P}|_{\bigcup Im(\mathcal{Q})}(v)$.
\end{enumerate}
\label{as3}\end{assumption}
The following lemma holds:
\begin{lemma} 
Under Hypothesis \ref{as1}, \ref{as2} and \ref{as3}, $\mathcal{G} = \mathcal{P}\circ\mathcal{Q}\circ\mathcal{R}:M_R\longrightarrow 2^{M_R}$ is a closed operator, with topology of $K$ used in $M_R$.
\label{le1}\end{lemma}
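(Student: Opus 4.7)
The goal is to show that the graph of $\mathcal{G}=\mathcal{P}\circ\mathcal{Q}\circ\mathcal{R}$ is closed in $M_R\times M_R$ endowed with the topology of $K$. So I would start by fixing a sequence $\kappa^i\to\kappa$ in $M_R$ together with $\bar\kappa^i\in\mathcal{G}(\kappa^i)$ satisfying $\bar\kappa^i\to\bar\kappa$ in $K$, and unravel the composition: for each $i$ there exist $f^i=\mathcal{R}(\kappa^i)\in\Ce{\R^n}$ and $v^i\in\mathcal{Q}(f^i)\subset\eLp{\infty}{\R^m}$ with $\bar\kappa^i=\mathcal{P}(v^i)$. The aim is to identify a limit $(f,v)$ such that $f=\mathcal{R}(\kappa)$, $v\in\mathcal{Q}(f)$ and $\bar\kappa=\mathcal{P}(v)$, from which $\bar\kappa\in\mathcal{G}(\kappa)$ will follow.

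The key ingredients are, in order: (i) Hypothesis \ref{as1}\ref{as1b} applied to $\kappa^i\to\kappa$, which after passage to a subsequence gives $f^{i_k}\to f=\mathcal{R}(\kappa)$ in the sense of property (\ref{a6}); (ii) Hypothesis \ref{as2}\ref{as2c}, which says the $v^i$ lie in a bounded subset of $\eLp{\infty}{\R^m}$, so by the Banach--Alaoglu theorem (recall that $\eLp{\infty}{\R^m}$ is the dual of the separable space $\eLp{1}{\R^m}$) a further subsequence admits a weak-$*$ limit $v^{i_{k_\ell}}\stackrel{*}{\rightharpoonup}v$; (iii) the closedness Hypothesis \ref{as2}\ref{as2b} applied to this pair of subsequences, which yields $v\in\mathcal{Q}(f)$; and (iv) the closedness Hypothesis \ref{as3}\ref{as3c} applied to $v^{i_{k_\ell}}\stackrel{*}{\rightharpoonup}v$ and $\bar\kappa^{i_{k_\ell}}=\mathcal{P}(v^{i_{k_\ell}})\to\bar\kappa$ in $K$, which gives $\bar\kappa=\mathcal{P}(v)$. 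Putting these together produces $\bar\kappa\in\mathcal{P}(\mathcal{Q}(\mathcal{R}(\kappa)))=\mathcal{G}(\kappa)$, as required.

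A small but important point is that the convergences used in Hypothesis \ref{as2}\ref{as2b} and Hypothesis \ref{as3}\ref{as3c} refer only to the convergence of the respective inputs/outputs, not to the original index; this is exactly why a nested subsequence selection is harmless. I would spell out that the candidate limits $f=\mathcal{R}(\kappa)$ and $\bar\kappa$ do not depend on the subsequence (the former is prescribed by Hypothesis \ref{as1}\ref{as1b}, the latter is the already-assumed limit of $\bar\kappa^i$), so extracting subsequences only narrows $v$ but is perfectly compatible with the final identification.

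The only mildly delicate step is (ii)--(iii): one must be sure that the weak-$*$ sequential compactness of $\eLp{\infty}{\R^m}$ really applies, i.e., that separability of the predual $\eLp{1}{\R^m}$ is available so that norm-bounded sets in $\eLp{\infty}{\R^m}$ are weak-$*$ sequentially relatively compact, and that the two convergences in Hypothesis \ref{as2}\ref{as2b} can indeed be combined along a common subsequence. Once this is noted, the rest is a straightforward chain of the three closedness properties, so I expect this to be the main obstacle and the remainder of the argument to be essentially bookkeeping.
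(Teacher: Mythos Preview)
Your argument is correct and follows essentially the same route as the paper's proof: unravel the composition, use boundedness of $\mathcal{Q}$ to extract a weak-$*$ convergent subsequence of the $v^i$, use Hypothesis~\ref{as1} to extract a subsequence with $f^i\to f=\mathcal{R}(\kappa)$ in the sense of~(\ref{a6}), and then invoke the closedness clauses of Hypotheses~\ref{as2} and~\ref{as3} to identify $v\in\mathcal{Q}(f)$ and $\bar\kappa=\mathcal{P}(v)$. Your explicit justification of sequential weak-$*$ compactness via separability of $\eLp{1}{\R^m}$ and your care with nested subsequences are sound refinements of what the paper leaves implicit.
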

\begin{proof}
Suppose that $\kappa^i\rightarrow \kappa$ and $\xi^i\rightarrow\xi$ in $M_R$ with topology of $K$ and that $\xi^i\in\mathcal{G}(\kappa^i)$. The question remains open whether $\xi\in\mathcal{G}(\kappa)$.

It is equivalent to say that $\xi=\mathcal{P}(v)\textrm{ and }v\in\mathcal{Q}\circ\mathcal{R}(\kappa)$
for some $v\in\eLp{\infty}{\R^m}$. At the same time, $\xi^i\in\mathcal{G}(\kappa^i)$ is equivalent to $\xi^i=\mathcal{P}(v^i)\textrm{ and }v^i\in\mathcal{Q}\circ\mathcal{R}(\kappa^i)$
for some $v\in\eLp{\infty}{\R^m}$. Due to Hypothesis \ref{as2}, $v^i$ is a bounded sequence, thus there exists a weakly-star convergent subsequence (for simplicity, we keep the original indexes when passing to a subsequence):
\[v^i \stackrel{*}{\rightharpoonup} v \textrm{ in } \eLp{\infty}{\R^m} \,. \]
By Hypothesis \ref{as3}, we hence obtain $\xi=\mathcal{P}(v)$.

Now it remains to check that $v\in\mathcal{Q}\circ\mathcal{R}(\kappa)$, i.e. $v\in \mathcal{Q}(f) \textrm{ and } f\in \mathcal{R}(\kappa)$
for some $f\in\Ce{\R^m}$. At the same time, the condition $v^i\in\mathcal{Q}\circ\mathcal{R}(\kappa^i)$ is equivalent to $v^i\in \mathcal{Q}(f^i) \textrm{ and } f^i\in \mathcal{R}(\kappa^i)$
for some $f^i\in\Ce{\R^n}$. Due to Hypothesis \ref{as1} there exists an element $f$ and a subsequence of $f^i$ such that (again after reindexing)
\[ \textrm{$f^i\rightarrow f$ in sense of (\ref{a6})} \]
and moreover $f=\mathcal{R}(\kappa)$. By the above property, weak-star convergence of $v^i$ and Hypothesis \ref{as2}, we can claim that $v=\mathcal{Q}(f)$.

\pagebreak
To sum up, we infer that there are elements $f$ and $v$ such that $f=\mathcal{R}(\kappa)$, $v=\mathcal{Q}(f)$ and $\xi=\mathcal{P}(v)$, the latter completing the proof.
\end{proof}
The next lemma follows straight forward as the corollary of the former one:
\begin{lemma}
Under Hypothesis \ref{as1}, \ref{as2} and \ref{as3}, the operator $\mathcal{G} = \mathcal{P}\circ\mathcal{Q}\circ\mathcal{R}:M_R\longrightarrow 2^{M_R}$ has closed values.
\label{co1}\end{lemma}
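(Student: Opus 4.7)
The plan is to deduce closedness of values as an immediate consequence of graph closedness established in Lemma~\ref{le1}. Specifically, I would fix an arbitrary $\kappa\in M_R$ and consider any sequence $(\xi^i)_{i=1}^\infty\subset \mathcal{G}(\kappa)$ converging in the topology of $K$ to some limit $\xi$; the goal is to verify $\xi\in\mathcal{G}(\kappa)$, which is precisely the statement that $\mathcal{G}(\kappa)$ is closed.

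The first easy observation is that the limit point $\xi$ automatically lies in $M_R$: the sequence $(\xi^i)$ is contained in $M_R$, which is compact in $K$ (the remark following definition (\ref{000eqn1.2})), in particular closed, so $\xi\in M_R$ and it makes sense to ask whether $\xi$ is a $\mathcal{G}$-image of $\kappa$. The core step is then a trick: apply Lemma~\ref{le1} with the constant sequence $\kappa^i:=\kappa$. Trivially $\kappa^i\to\kappa$ in $K$, and by hypothesis $\xi^i\in\mathcal{G}(\kappa^i)$ together with $\xi^i\to\xi$ in $K$. The graph-closedness conclusion of Lemma~\ref{le1} therefore yields $\xi\in\mathcal{G}(\kappa)$, finishing the argument.

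There is no genuine obstacle here; the entire content of the closed-values assertion is already absorbed into the graph-closedness proof, which is precisely why the statement is flagged as a straightforward corollary. The only points worth mentioning explicitly in writing are the use of compactness of $M_R$ to keep $\xi$ inside the domain of $\mathcal{G}$, and the specialization of the closed-graph sequential criterion to a constant first coordinate.
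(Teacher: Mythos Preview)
Your proposal is correct and matches the paper's approach exactly: the paper simply records this lemma as a ``straightforward corollary'' of Lemma~\ref{le1} without further argument, and your specialization of the closed-graph criterion to the constant sequence $\kappa^i=\kappa$ is precisely how one unpacks that corollary. The extra remark about $\xi\in M_R$ via compactness is a helpful clarification that the paper leaves implicit.
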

\begin{flushright}$\Box$\end{flushright}

Aiming to fulfill the assumptions of Theorem~\ref{thkakutani}, we still need to guarantee that $\mathcal{G}$ has nonempty and convex values. We need to make another hypotheses which will assure these properties:
\begin{assumption}
$\mathcal{P}$ is an affine operator, i.e. for every $x\in\eLp{\infty}{\R^m}$ we have $\mathcal{P}(x)=y_0 + \mathcal{P}_0(x)$ where $y_0\in K$ and $\mathcal{P}_0:\eLp{\infty}{\R^m}\longrightarrow K$ is a linear operator.
\label{as5}\end{assumption}
\begin{assumption}
Suppose that a multivalued function $W:[0,T]\times \R^n \longrightarrow 2^{\R^m}$, $W=(W_1,\ldots,W_m)$ is given where $W_j$ satisfy the general assumption (\ref{asm3}). We assume that $\mathcal{Q}$ is defined in the following way: \\
for given $f\in \Ce{\R^n}$ any $v\in\eLp{\infty}{\R^m}$ satisfies $v\in\mathcal{Q}(f)$ if and only if $v(t)\in W(t,f(t))$ a.e. on $(0,T)$.
\label{as4}\end{assumption}
\begin{lemma}
Under Hypothesis \ref{as4}, the operator $\mathcal{Q}$:
\begin{enumerate}[a)]
\item has nonempty and convex values
\item is bounded, i.e. there is come $C>0$ such that $\mathcal{Q}(f)\subset B_{\eLp{\infty}{R^m}}(0,C)$ for every $f\in \Ce{\R^n}$
\end{enumerate}
\label{le2}\end{lemma}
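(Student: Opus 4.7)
The plan is to establish the three required properties (nonempty values, convex values, boundedness) separately: boundedness and convexity are immediate from Hypothesis~\ref{as4} (which reduces to assumption~(\ref{asm3})), whereas nonemptiness is the only substantive step and will rest on a measurable selection argument.

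For boundedness, assumption~(\ref{asm3c}) gives $W_j(t,r)\subset[-C_j,C_j]$ for every $(t,r)$, so any $v=(v_1,\ldots,v_m)\in\mathcal{Q}(f)$ satisfies $|v_j(t)|\le C_j$ for a.e.\ $t$, whence $\xnorm{v}{\eLp{\infty}{\R^m}}\le C$ for a constant $C$ depending only on $C_1,\ldots,C_m$ and uniform in $f$. For convexity, each $W_j(t,f(t))$ is convex in $\R$ by~(\ref{asm3a}), so the product $W(t,f(t))=W_1(t,f(t))\times\cdots\times W_m(t,f(t))$ is convex in $\R^m$; given $v^1,v^2\in\mathcal{Q}(f)$ and $\lambda\in[0,1]$, the pointwise combination $\lambda v^1+(1-\lambda)v^2$ then lies in $W(t,f(t))$ a.e.\ and is manifestly essentially bounded, so it belongs to $\mathcal{Q}(f)$.

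The nontrivial step, and the expected main obstacle, is nonemptiness. Fix $f\in\Ce{\R^n}$ and define $F_j:[0,T]\to 2^{\R}$ by $F_j(t):=W_j(t,f(t))$. Since $t\mapsto(t,f(t))$ is continuous and $W_j$ is upper semicontinuous by~(\ref{asm3b}), $F_j$ itself is upper semicontinuous with nonempty, closed, and bounded (hence compact) values in $\R$. I would then verify, via the sequential characterization of upper semicontinuity for compact-valued maps, that $F_j^{-1}(D):=\{t\in[0,T]:F_j(t)\cap D\ne\emptyset\}$ is closed for every closed $D\subset\R$: if $t_i\to t$ and $y_i\in F_j(t_i)\cap D$, uniform boundedness of $F_j$ allows extraction of a subsequence $y_{i_k}\to y\in D$, and upper semicontinuity with compact values forces $y\in F_j(t)$. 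Writing any open subset of $\R$ as a countable union of closed sets then yields Borel measurability of $F_j$, activating the Kuratowski--Ryll-Nardzewski measurable selection theorem, which produces a Borel measurable $v_j$ with $v_j(t)\in F_j(t)$ for every $t$. Since $|v_j(t)|\le C_j$ we have $v_j\in L^\infty(0,T)$, and $v:=(v_1,\ldots,v_m)$ is the desired element of $\mathcal{Q}(f)$. The only delicate point in the whole argument is the bridge from upper semicontinuity of $W_j$ to measurability of the composition $F_j$; once this is in hand, a standard selection theorem closes the proof.
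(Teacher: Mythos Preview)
Your proof is correct and follows the same three-step structure as the paper: boundedness from assumption~(\ref{asm3c}), convexity from assumption~(\ref{asm3a}), and nonemptiness via a measurable selection of $t\mapsto W_j(t,f(t))$. The only difference is that the paper delegates the measurable selection step to Theorem~\ref{th4} (which in turn cites \cite[Lemma~3.4]{gs1}), whereas you supply a self-contained argument: you show that $F_j=W_j(\cdot,f(\cdot))$ is upper semicontinuous with compact values, deduce that weak preimages of closed sets are closed (hence $F_j$ is Borel measurable), and then invoke Kuratowski--Ryll-Nardzewski. This is exactly the kind of argument underlying Theorem~\ref{th4}, so the two proofs are essentially the same, yours simply being more explicit.
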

\begin{proof}
By the boundedness of $W$, the second assertion follows directly. The convexity also is in turn a straightforward consequence of the convexity of the values of $W$.

It remains to prove that for arbitrary  $f\in\Ce{\R^n}$ there is some $v=(v_1,\ldots,v_m)\in\eLp{\infty}{\R^m}$ which belongs to $\mathcal{Q}(f)$, or, equivalently that $v_j(t)\in W_j(t,f(t))$ a.e. on $(0,T)$ for $j=1,\ldots,m$. By the boundedness of $W$, it is enough to check that a measurable selection is possible, namely that there exists some measurable $v_j$ satisfying the last condition. But this is assured by Theorem \ref{th4}.
\end{proof}
It follows from the above result that:
\begin{lemma}
Under Hypothesis \ref{as1}, \ref{as3}, \ref{as5} and \ref{as4}, the operator \\$\mathcal{G}=\mathcal{P}\circ\mathcal{Q}\circ\mathcal{R}:M_R\longrightarrow 2^{M_R}$ has nonempty and convex values.
\label{le3}\end{lemma}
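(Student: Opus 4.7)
The plan is to derive Lemma \ref{le3} directly from Lemma \ref{le2} (which gives nonemptiness and convexity of the values of $\mathcal{Q}$) together with Hypothesis \ref{as5} (affinity of $\mathcal{P}$), using Hypotheses \ref{as1} and \ref{as3} only to ensure that the compositions make sense. Hypothesis \ref{as1}a) guarantees that $\mathcal{R}$ maps $M_R$ into $C([0,T];\R^n)$, and Hypothesis \ref{as3}b) guarantees that $\mathcal{P}|_{\bigcup Im(\mathcal{Q})}$ lands in $M_R$, so $\mathcal{G}(\kappa)\subset M_R$ is well defined for every $\kappa \in M_R$.

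For nonemptiness, fix $\kappa \in M_R$ and set $f = \mathcal{R}(\kappa) \in C([0,T];\R^n)$. By Lemma \ref{le2}a), $\mathcal{Q}(f)$ is nonempty, so we may pick $v \in \mathcal{Q}(f)$, and then $\mathcal{P}(v) \in \mathcal{G}(\kappa)$.

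For convexity, take $\xi^1,\xi^2 \in \mathcal{G}(\kappa)$ and $\lambda \in [0,1]$. By definition of $\mathcal{G}$ there exist $v^1, v^2 \in \mathcal{Q}(\mathcal{R}(\kappa))$ with $\xi^i = \mathcal{P}(v^i)$. By the convexity part of Lemma \ref{le2}a), the convex combination $v_\lambda := \lambda v^1 + (1-\lambda) v^2$ still lies in $\mathcal{Q}(\mathcal{R}(\kappa))$. Now Hypothesis \ref{as5} writes $\mathcal{P} = y_0 + \mathcal{P}_0$ with $\mathcal{P}_0$ linear, so
\[
\mathcal{P}(v_\lambda) = y_0 + \mathcal{P}_0\bigl(\lambda v^1 + (1-\lambda)v^2\bigr) = \lambda\bigl(y_0+\mathcal{P}_0(v^1)\bigr) + (1-\lambda)\bigl(y_0+\mathcal{P}_0(v^2)\bigr) = \lambda \xi^1 + (1-\lambda)\xi^2,
\]
which therefore belongs to $\mathcal{G}(\kappa)$.

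There is no real obstacle here: the statement is essentially the observation that an affine push-forward of a nonempty convex set is nonempty and convex, combined with Lemma \ref{le2}. The only thing one has to keep an eye on is the correct bookkeeping of the domains of $\mathcal{P}$, $\mathcal{Q}$ and $\mathcal{R}$, which is already encoded in Hypotheses \ref{as1}--\ref{as3}, and the fact that Hypothesis \ref{as5} is invoked with the full $\mathcal{P}$ (not merely its restriction) so that the linear decomposition applies to the convex combination $v_\lambda$.
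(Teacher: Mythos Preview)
Your proof is correct and follows essentially the same route as the paper: nonemptiness from Lemma~\ref{le2} together with the well-definedness of $\mathcal{R}$ and $\mathcal{P}$, and convexity by pushing a convex combination $v_\lambda=\lambda v^1+(1-\lambda)v^2$ through the affine map $\mathcal{P}$ after using the convexity of $\mathcal{Q}(\mathcal{R}(\kappa))$. The only cosmetic difference is that you spell out the computation $\mathcal{P}(v_\lambda)=\lambda\xi^1+(1-\lambda)\xi^2$ via the decomposition $\mathcal{P}=y_0+\mathcal{P}_0$, whereas the paper just appeals to affinity.
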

\begin{proof}
The non-emptiness is a consequence of Lemma \ref{le2} and the well-posedness of the operators $\mathcal{P}$ and $\mathcal{R}$. 

Let us still check the convexity. Suppose that $\xi^1, \xi^2\in\mathcal{G}(\kappa)$ for some $\kappa\in M_R$. We want to show that for arbitrary $\lambda\in(0,1)$ there holds $\xi^\lambda:=\lambda\xi^1+(1-\lambda)\xi^2\in\mathcal{G}(\kappa)$. It is equivalent to say that $\xi^\lambda=\mathcal{P}(v^\lambda)\textrm{ and }v^\lambda\in\mathcal{Q}\circ\mathcal{R}(\kappa)$ for some $v^\lambda$. 

Note that the condition $\xi^i\in\mathcal{G}(\kappa)$ is equivalent to $\xi^i=\mathcal{P}(v^i)\textrm{ and }v^i\in\mathcal{Q}\circ\mathcal{R}(\kappa)$ 
for some $v^i$, $i=1,2$. Thus let $v^\lambda=\lambda v^1+(1-\lambda)v^2$. $v^\lambda\in\mathcal{Q}\circ\mathcal{R}(\kappa)$ because $\mathcal{Q}$ has convex values, as stated in Lemma \ref{le2}. Now, taking into account the affinity of $\mathcal{P}$, $\xi^\lambda=\mathcal{P}(v^\lambda)$ can be obtained by direct calculation.
\end{proof}
Due to the above lemmas, under Hypothesis \ref{as1} -- \ref{as4} it is possible to use Theorem~\ref{thkakutani}, still there is one more one simplification that can be made, namely it can be shown that Hypothesis \ref{as4} is stronger that Hypothesis \ref{as2}.
\begin{lemma}
If the operator $\mathcal{Q}$ satisfies Hypothesis \ref{as4}, then it also satisfies Hypothesis \ref{as2}.
\label{le4}\end{lemma}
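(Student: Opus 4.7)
The plan is to verify the three parts (a), (b), (c) of Hypothesis \ref{as2} separately. Parts (a) and (c) are immediate from the definition given in Hypothesis \ref{as4}: the images clearly lie in $\eLp{\infty}{\R^m}$, and by assumption (\ref{asm3c}) every $v \in \mathcal{Q}(f)$ satisfies $|v_j(t)| \leq C_j$ a.e., so $\bigcup Im(\mathcal{Q})$ is contained in a fixed ball of $\eLp{\infty}{\R^m}$. All the work is in verifying the closedness property (b).

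For (b), assume $f^i \to f$ in the sense of (\ref{a6}), $v^i \stackrel{*}{\rightharpoonup} v$ in $\eLp{\infty}{\R^m}$, and $v^i \in \mathcal{Q}(f^i)$. Fix an arbitrary $\varepsilon > 0$ and a Lebesgue point $t_0 \in (0,T)$ of $v$ (these are a.e., since $v \in L^\infty \subset L^1_{loc}$). By upper semicontinuity of $W$ at $(t_0, f(t_0))$ and by closedness plus convexity of $W(t_0,f(t_0))$, there exists $\eta > 0$ such that
\[ W(s,r) \subset A_\varepsilon := W(t_0,f(t_0)) + \overline{B(0,\varepsilon/2)} \quad \text{for all } (s,r) \in (t_0-\eta, t_0+\eta)\times B(f(t_0),\eta),\]
where $A_\varepsilon$ is convex and closed. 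By (\ref{a6}) applied with $\delta = \eta$, there exist $i_0 \in \N$ and $h_0 \in (0, \eta]$ with $|f^i_k(t_0+h) - f_k(t_0)| < \eta$ for all $i \geq i_0$, $|h| \leq h_0$, $t_0+h \in (0,T)$, and all $k$. Consequently, for such $i$ and $h$, $v^i(t_0+h) \in W(t_0+h, f^i(t_0+h)) \subset A_\varepsilon$.

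Now fix $0 < h \leq h_0$ with $t_0 + h \in (0,T)$. Since $A_\varepsilon$ is convex and closed, the pointwise inclusion gives $\tfrac{1}{h}\int_{t_0}^{t_0+h} v^i(s)\,ds \in A_\varepsilon$ for every $i \geq i_0$. Testing the weak-$*$ convergence $v^i \stackrel{*}{\rightharpoonup} v$ against $h^{-1}\mathbf{1}_{[t_0, t_0+h]} \in L^1(0,T)$ (componentwise) and passing to the limit in $i$ yields $\tfrac{1}{h}\int_{t_0}^{t_0+h} v(s)\,ds \in A_\varepsilon$. Letting $h \to 0^+$ and using the Lebesgue differentiation theorem at $t_0$ gives
\[ v(t_0) \in A_\varepsilon = W(t_0,f(t_0)) + \overline{B(0,\varepsilon/2)} \subset W(t_0,f(t_0)) + B(0,\varepsilon). \]
Since $\varepsilon > 0$ was arbitrary and $W(t_0, f(t_0))$ is closed, $v(t_0) \in W(t_0, f(t_0))$. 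This holds for a.e. $t_0 \in (0,T)$, hence $v \in \mathcal{Q}(f)$, which completes the verification of Hypothesis \ref{as2}.

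The main obstacle is handling the unusual mode of convergence (\ref{a6}) together with only weak-$*$ (rather than strong) convergence of $v^i$; the idea that bridges the two is to use upper semicontinuity to sandwich the $v^i$ pointwise into a fixed convex closed set near $t_0$, average via the weak-$*$ test, and then apply Lebesgue differentiation. A small care has to be taken that only $t_0 \in (0,T)$ are needed since (\ref{a6}) and Lebesgue points are both defined on the open interval.
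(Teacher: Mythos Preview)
Your proof is correct. The paper itself does not spell out part~\ref{as2b}) but simply refers to \cite[Theorem~3.6]{gs1} and \cite[Lemma~4.4]{hns}; the argument you give --- trapping the $v^i$ in a closed convex neighbourhood $A_\varepsilon$ of $W(t_0,f(t_0))$ via upper semicontinuity, passing to the limit in the weak-$*$ sense through averages, and then invoking Lebesgue differentiation --- is exactly the standard mechanism behind those references, so your approach is essentially the same, only written out in full. Two small cosmetic points: when you invoke upper semicontinuity you should strictly apply it to the \emph{open} set $W(t_0,f(t_0))+B(0,\varepsilon/2)$ and then pass to its closure $A_\varepsilon$; and upper semicontinuity of the vector-valued $W=(W_1,\ldots,W_m)$ is being used, which follows from the componentwise assumption (\ref{asm3b}) together with compactness of the values (assumptions (\ref{asm3a}), (\ref{asm3c})), or alternatively you could run the whole argument componentwise.
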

\begin{proof}
Condition \ref{as2a}) in Hypothesis \ref{as2} follows directrly, condition \ref{as2c}) is assured by Lemma \ref{le2}. The proof that the condition \ref{as2b}) is assured can be found in \cite[theorem~3.6]{gs1} for the case $n=m$, an analogue of this assertion was used in \cite[lemma~4.4]{hns} for $n\neq m$.
\end{proof}
Altogether, as a concluding corollary we can state the final theorem of the above considerations:
\begin{theorem}
Suppose that $\mathcal{G} = \mathcal{P}\circ\mathcal{Q}\circ\mathcal{R}$ where $\mathcal{P}$, $\mathcal{Q}$, $\mathcal{R}$ satisfy Hypothesis \ref{as1}, \ref{as3}, \ref{as5} and \ref{as4}, respectively. Then there exists a fixed point of $\mathcal{G}$ in $M_R$, i.e. there exists a $\kappa\in M_R$ such that $\kappa\in \mathcal{G}(\kappa)$.
\label{th6}\label{cor6}\end{theorem}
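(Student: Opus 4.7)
The proof is essentially an assembly of the lemmas already established in this subsection; the statement is really a corollary of Theorem~\ref{thkakutani} applied to $\mathcal{G}$ on the set $M_R$. My plan is therefore to verify each of the hypotheses of Theorem~\ref{thkakutani} by invoking the appropriate lemma, and then to invoke that theorem.

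First I would take $X=K=\Ce{\R^m}$ and $M=M_R$. Recall that, as noted just after \eqref{000eqn1.2}, $M_R$ is nonempty, convex, and compact in $K$ (compactness via Arzel\`a--Ascoli). Since $\mathcal{G}=\mathcal{P}\circ\mathcal{Q}\circ\mathcal{R}$ maps $M_R$ into $2^{M_R}$ by the way $\mathcal{P}$, $\mathcal{Q}$, $\mathcal{R}$ are designed to chain together, it remains to check that the values $\mathcal{G}(\kappa)$ are nonempty, closed, and convex for every $\kappa\in M_R$, and that $\mathcal{G}$ has closed graph in $K\times K$.

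Next I would handle the two value-properties. Nonemptiness and convexity of $\mathcal{G}(\kappa)$ are provided directly by Lemma~\ref{le3}, whose statement requires exactly Hypotheses~\ref{as1}, \ref{as3}, \ref{as5} and \ref{as4}, all of which are in force. For closedness of the values, I would first observe, via Lemma~\ref{le4}, that Hypothesis~\ref{as4} implies Hypothesis~\ref{as2}; hence Hypotheses~\ref{as1}, \ref{as2}, \ref{as3} are satisfied, and Lemma~\ref{co1} (which is the value-theoretic corollary of Lemma~\ref{le1}) yields closed values.

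For the closed-graph property I would again use the implication Hypothesis~\ref{as4} $\Rightarrow$ Hypothesis~\ref{as2} from Lemma~\ref{le4}, so that Hypotheses~\ref{as1}, \ref{as2}, \ref{as3} hold, and then invoke Lemma~\ref{le1} directly: it says exactly that $\mathcal{G}$ is closed with respect to the topology of $K$, i.e.\ has closed graph in $K\times K$. All conditions of Theorem~\ref{thkakutani} are thereby verified, and the conclusion gives some $\kappa\in M_R$ with $\kappa\in\mathcal{G}(\kappa)$, as required.

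There is no real obstacle at this stage, since the hard work sits inside Lemmas~\ref{le1}, \ref{co1}, \ref{le2}, \ref{le3} and \ref{le4}; the only point that needs a moment of care is the implicit reduction from Hypothesis~\ref{as4} to Hypothesis~\ref{as2} before one can quote Lemmas~\ref{le1} and \ref{co1}. Once that reduction is made explicit, the theorem follows by a direct application of the generalized Kakutani theorem.
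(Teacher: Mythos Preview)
Your proposal is correct and follows essentially the same approach as the paper: apply Theorem~\ref{thkakutani} with $X=K$ and $M=M_R$, invoking Lemma~\ref{le4} to reduce Hypothesis~\ref{as4} to Hypothesis~\ref{as2}, then Lemmas~\ref{le1}, \ref{co1}, and \ref{le3} to verify the closed-graph and value properties. The paper's own proof is simply a more compressed version of exactly this assembly.
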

\begin{proof}
We can apply Theorem~\ref{thkakutani} with $X=K$ and $M=M_R$. Its hypotheses are assured by Lemma \ref{le4}, Lemma \ref{le1} (closedness of the fixed-point operator), Lemma \ref{co1} (closedness of its values) and Lemma \ref{le3}.
\end{proof}
\begin{remark}The proofs of Lemma \ref{le1} and \ref{le3} exploited some arguments utilized in the proof of \cite[theorem~4.1]{hns}.
\end{remark}

\subsection{Conclusion of the proof of existence}\label{proof3}
This subsection is devoted to checking that the operators $\mathbf{P}$, $\mathbf{Q}$ and $\mathbf{R}$ from Subsection \ref{proof1} satisfy Hypothesis \ref{as1}, \ref{as3}, \ref{as5} and \ref{as4} stated in Subsection \ref{proof2}. As stated in Theorem \ref{th6}, this will imply that the superposition $\mathbf{P}\circ\mathbf{Q}\circ\mathbf{R}$ has at least one fixed point $\kappa\in M_S$ (where $M_R$ is defined in (\ref{000eqn1.2}) and $S$ is the constant set in assumption (\ref{asm4a})). Due to the Definition \ref{000def4} (equivalent to Definition \ref{000def3}), it will mean that the fixed point $\kappa$ determines a solution $(\mathfrak{u}(\kappa),\kappa)$ of the problem (\ref{eqnmain}) (where $\mathfrak{u}$ is the operator defined in (\ref{000eqn1.3})), hence completing the proof of Theorem \ref{thmain}. We remind that for Theorem \ref{thmain}, all assumptions (\ref{asm1}) - (\ref{asm7}) are needed.

Some of the properties specified in Hypothesis \ref{as1}, \ref{as3}, \ref{as5} and \ref{as4} have already got justified in Subsection \ref{proof1}. Moreover, it follows readily that $\mathbf{P}$ is affine. It remains to prove that $\mathbf{P}$ satisfies Hypothesis \ref{as3} \ref{as3c}) while $\mathbf{R}$ satisfies Hypothesis \ref{as1} \ref{as1b}).

\begin{lemma}
If $v^i \stackrel{*}{\rightharpoonup} v$ in $\eLp{\infty}{\R^m}$, $\kappa^i\longrightarrow \kappa$ in $K$ and $\kappa^i=\mathbf{P}(v^i)$ then $\kappa =\mathbf{P}(v)$.
\end{lemma}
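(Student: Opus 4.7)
The plan is to exploit the explicit integral representation of $\mathbf{P}$ established in (\ref{000eqn2.1}): for each $j=1,\ldots,m$,
\[
(\mathbf{P}(v))_j(t) \;=\; e^{-t/\beta_j}a_j \;+\; \frac{1}{\beta_j}e^{-t/\beta_j}\int_0^t e^{s/\beta_j}\,v_j(s)\,ds.
\]
Since pointwise values of $\mathbf{P}(v)$ are obtained by testing $v$ against fixed $L^1$-integrands, $\mathbf{P}$ is essentially continuous from the weak-$*$ topology of $L^\infty$ to the pointwise topology on $[0,T]$, and the conclusion follows by uniqueness of limits.

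First I would fix $t\in[0,T]$ and $j\in\{1,\ldots,m\}$, and introduce the test function $\varphi_{t,j}(s)=\mathbf{1}_{[0,t]}(s)\,e^{s/\beta_j}$, which clearly belongs to $L^1(0,T)$. Writing the integrand as this $L^1$ test function paired with the $j$-th component of $v^i$, the weak-$*$ convergence $v^i\stackrel{*}{\rightharpoonup}v$ in $\eLp{\infty}{\R^m}$ yields directly
\[
\int_0^t e^{s/\beta_j}\,v_j^i(s)\,ds \;\longrightarrow\; \int_0^t e^{s/\beta_j}\,v_j(s)\,ds.
\]
Multiplying by the $i$-independent factor $\tfrac{1}{\beta_j}e^{-t/\beta_j}$ and adding the $i$-independent term $e^{-t/\beta_j}a_j$, this gives $(\mathbf{P}(v^i))_j(t)\to (\mathbf{P}(v))_j(t)$ for every $t\in[0,T]$ and every $j$.

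On the other hand, the hypothesis $\kappa^i\to\kappa$ in $K=\Ce{\R^m}$ means uniform convergence on $[0,T]$, which in particular implies pointwise convergence $\kappa_j^i(t)\to\kappa_j(t)$ for every $t$. Since $\kappa^i=\mathbf{P}(v^i)$, we have $\kappa_j^i(t)=(\mathbf{P}(v^i))_j(t)$, and combining the two convergences with uniqueness of limits in $\R$ gives $\kappa_j(t)=(\mathbf{P}(v))_j(t)$ for all $t\in[0,T]$ and all $j$, i.e.\ $\kappa=\mathbf{P}(v)$.

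I do not foresee a genuine obstacle here: the argument is essentially a one-line application of the definition of weak-$*$ convergence against an explicit $L^1$ kernel. The only subtlety worth stating is the choice of the integrand: one must truncate by $\mathbf{1}_{[0,t]}$ to obtain a function in $L^1(0,T)$ (as opposed to $L^1_{\mathrm{loc}}$ of a moving interval), so that the weak-$*$ pairing applies cleanly. Everything else — the affinity of $\mathbf{P}$ absorbing the deterministic term, and the passage from uniform convergence of $\kappa^i$ to pointwise convergence — is automatic.
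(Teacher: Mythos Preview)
Your argument is correct: pairing each $v_j^i$ against the fixed $L^1$ kernel $\mathbf{1}_{[0,t]}(s)e^{s/\beta_j}$ gives pointwise convergence of $\mathbf{P}(v^i)$ to $\mathbf{P}(v)$, and uniqueness of limits against the uniform convergence $\kappa^i\to\kappa$ finishes it. The paper does not spell out a proof but simply refers to \cite[theorem~4.1]{hns}, and the argument there is essentially the one you have written.
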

The proof of the above lemma follows along the lines of part of the proof of \cite[theorem~4.1]{hns}.
\begin{flushright}$\Box$\end{flushright}

\begin{lemma}
If $\xi^i\rightarrow\xi$ in $K$, $\xi^i,\xi\in M_S$, then the sequence $f^i=\mathbf{R}(\xi^i)$ has a subsequence convergent in sense of (\ref{a6}) to $f=\mathbf{R}(\xi)$.
\end{lemma}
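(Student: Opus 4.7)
The plan is to obtain uniform interior parabolic regularity for the family $u^i := \mathfrak{u}(\xi^i)$, extract via Arzel\`a-Ascoli a subsequence along which $u^i$ converges uniformly on neighbourhoods of each $x_k^*$, and identify the limit with $u := \mathfrak{u}(\xi)$ by passing to the limit in the weak formulation. More precisely, since $\{\xi^i\}\cup\{\xi\}\subset M_S$, by (V c) the forcings $g^i := g(\,\cdot\,,\,\cdot\,;\xi^i(\,\cdot\,))$ are uniformly bounded in $\eLp{q}{\Lp{p}}$, and by (V a) each lies in $\eLp{2}{\Lp{\infty}}$. Together with $u_0\in\Lp{\infty}$ from (VI) and the growth condition (\ref{asm2a}) on $f$, the parabolic regularity used in the proof of Theorem~\ref{th3}---whose applicability is precisely enabled by the scaling $d/(2p)+1/q<1$ of (\ref{asm4b})---yields a uniform bound for $\{u^i\}$ in $L^\infty(Q_T)\cap\eLp{2}{\Ha{1}}$ and, more importantly, a uniform interior H\"older estimate on every set of the form $\bar U_k\times[\tau,T]$, where $\bar U_k\subset\Omega$ is a closed ball about $x_k^*$ and $\tau\in(0,T)$. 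A diagonal argument over $k=1,\dots,n$ and $\tau\downarrow 0$ combined with Arzel\`a-Ascoli then produces a subsequence (not relabelled) such that $u^i(x_k^*,\,\cdot\,)\to\tilde h_k$ uniformly on $[\tau,T]$ for every $\tau>0$ and every $k$.

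It remains to identify $\tilde h_k$ with $u(x_k^*,\,\cdot\,)$. From the uniform bounds and the equation, $u^i_t$ is bounded in $\eLp{2}{\dual{\Ha{1}}}$, so Aubin-Lions yields a further subsequence with $u^i\to\bar u$ strongly in $L^2(Q_T)$ and weakly in $\eLp{2}{\Ha{1}}$. The uniform $L^qL^p$ bound (with $p,q>1$) gives weak convergence $g^i\rightharpoonup g(\,\cdot\,,\,\cdot\,;\xi(\,\cdot\,))$ along a further subsequence, the identification being forced by the $L^1(Q_T)$ convergence granted by (V b); since $\eLp{2}{\Ha{1}}$ embeds into the predual of $\eLp{q}{\Lp{p}}$ (Sobolev embedding combined with $q=2$), this suffices to pass to the limit in the linear forcing term of the weak formulation. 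The nonlinear term $f(u^i)$ converges to $f(\bar u)$ in $L^2(Q_T)$ by dominated convergence, using the local Lipschitz property of $f$ and the uniform $L^\infty$ bound on $u^i$. Hence $\bar u$ solves (\ref{000eqn1}) in the sense of Definition~\ref{000def1} with datum $\xi$, and uniqueness from Theorem~\ref{th1} forces $\bar u=u$, whence $\tilde h_k=u(x_k^*,\,\cdot\,)$. Since $\mathbf{R}(\xi)\in\Ce{\R^n}$ is uniformly continuous on $[0,T]$, its modulus of continuity supplies an $h_0(\delta)\in(0,\delta]$ (independent of $t$) with $|u(x_k^*,t+h)-u(x_k^*,t)|<\delta/2$ whenever $|h|\le h_0(\delta)$, the uniform convergence on $[t-h_0,t+h_0]\subset(0,T)$ supplies an $i_0(t,\delta)$ giving $|u^i(x_k^*,s)-u(x_k^*,s)|<\delta/2$ there, and the triangle inequality verifies (\ref{a6}).

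The main obstacle is the uniform interior H\"older estimate for $\{u^i\}$ in the first paragraph: the H\"older constants must depend on the data only through quantities controlled uniformly along the sequence (the $L^\infty$ norm of $u_0$, the $L^qL^p$ norm of $g^i$, and the local Lipschitz constant of $f$ on the common $L^\infty$ ball of the $u^i$). The subcritical scaling $d/(2p)+1/q<1$ is exactly what guarantees that $L^qL^p$ forcing yields H\"older continuous weak solutions with constants depending only on the $L^qL^p$ norm; this is the content of the parabolic regularity theorem invoked in the proof of Theorem~\ref{th3}, and everything else in the argument is a standard combination of Arzel\`a-Ascoli, Aubin-Lions compactness, and passage to the limit in the weak formulation using uniqueness from Theorem~\ref{th1}.
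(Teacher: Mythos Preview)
Your argument is correct, and it shares with the paper the decisive ingredient: the uniform interior H\"older estimate of Theorem~\ref{th3}\,\ref{th3c}) for the family $\{\mathfrak{u}(\xi^i)\}$, available precisely because $\xi^i\in M_S$ and assumption (\ref{asm5c}) bounds $\|g^i\|_{\eLp{q}{\Lp{p}}}$ uniformly.

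The route to identifying the limit, however, differs. You extract compactness via Arzel\`a--Ascoli and Aubin--Lions, pass to the limit in the weak formulation (handling the forcing by weak $L^qL^p$ convergence against $\phi\in\eLp{2}{\Ha{1}}\hookrightarrow L^{q'}L^{p'}$), and invoke uniqueness from Theorem~\ref{th1}. The paper instead bypasses the weak formulation entirely: Theorem~\ref{th2} together with assumption (\ref{asm5b}) gives directly $\mathfrak{u}(\xi^i)\to\mathfrak{u}(\xi)$ in $L^1(Q_T)$, hence a.e.\ along a subsequence; the paper then fixes $t\in(0,T)$, introduces an auxiliary point $(\tilde x,\tilde t)$ in a small H\"older ball around $(x_k^*,t)$ at which the a.e.\ convergence holds, and splits $|f_k^i(t+h)-f_k(t)|$ into four pieces---three controlled by the uniform H\"older constant $c_6$, the fourth by pointwise convergence at $(\tilde x,\tilde t)$. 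Your approach is the more systematic compactness argument and would transfer unchanged to settings where no $L^1$-stability estimate like Theorem~\ref{th2} is on hand; the paper's approach is shorter here because that estimate is already available and makes the Aubin--Lions machinery, the duality pairing for $g^i$, and the re-verification of the weak formulation unnecessary.
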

\begin{proof}
As in the proof of \cite[theorem~4.1]{hns}, we are going to show that the sequence $(f^i)_{i=1}^\infty$ has a subsequence $(f^{i_s})_{s=1}^\infty$ such that for every $k=1,\ldots,n$ for every $\delta_k>0$ and $t\in (0,T)$ the inequality $\hnorm{f_k^{i_s}(t+h)-f_k(t)}<\delta_k$ holds for $s$ large enough ($s\geq s_0(t,\delta_k)$) and $h$ small enough ($0\leq \hnorm{h}\leq h_0(\delta_k)$ with $h_0(\delta_k)\in(0,\delta_k]$).

Consider the sequence $(f^i_k)_{i=1}^\infty$ for some $k\in\{1,\ldots,n\}$, where $f^i$ are defined as in the statement of the lemma. By the definition of the operator $\mathbf{R}$ and by the triangle inequality,
\begin{eqnarray} 
\hnorm{f_k^i(t+h)-f_k(t)} &=& \hnorm{\mathfrak{u}(\xi^i)(x^*_k,t+h) - \mathfrak{u}(\xi)(x^*_k,t)}\nonumber\\
&\leq&\hnorm{\mathfrak{u}(\xi^i)(x^*_k,t+h) - \mathfrak{u}(\xi^i)(x^*_k,t)}\ +\ \hnorm{\mathfrak{u}(\xi^i)(x^*_k,t) - \mathfrak{u}(\xi^i)(\tilde{x},\tilde{t})}\nonumber\\
&+&\hnorm{\mathfrak{u}(\xi^i)(\tilde{x},\tilde{t}) - \mathfrak{u}(\xi)(\tilde{x},\tilde{t})}\ +\ \hnorm{\mathfrak{u}(\xi)(\tilde{x},\tilde{t}) - \mathfrak{u}(\xi)(x^*_k,t)}\label{000eqn3.1}
\end{eqnarray}
holds for arbitrary $(\tilde{x},\tilde{t})\in Q_T$. We will individually estimate each of the above four right hand side terms.

By Theorem \ref{th3}, the first of these terms can be estimated by:
\begin{eqnarray}
\hnorm{\mathfrak{u}(\xi^i)(x^*_k,t+h) - \mathfrak{u}(\xi^i)(x^*_k,t)} &\leq& c_6\,\left|\left|\left| (x^*_k,t+h) - (x^*_k,t) \right|\right|\right|^\alpha\nonumber\\
&= & c_6\,\left|\left|\left| (0,h) \right|\right|\right|^\alpha\ =\ c_6\,(h/4)^\alpha\ < \delta_k/4\label{000eqn3.2}
\end{eqnarray}
for $h<4\,(\delta_k/(4\,c_6))^{1/\alpha}$ (where $\left|\left|\left|\: .\: \right|\right|\right|$ is defined in the statement of Theorem \ref{th3}).

Theorem \ref{th3} allows also to infer analogous estimates for two of the remaining three terms in (\ref{000eqn3.1}):
\begin{eqnarray}
&&\hnorm{\mathfrak{u}(\xi^i)(x^*_k,t) - \mathfrak{u}(\xi^i)(\tilde{x},\tilde{t})}\ \leq\ c_6\,\left|\left|\left| (x^*_k - \tilde{x},t-\tilde{t}) \right|\right|\right|^\alpha\ <\  \delta_k/4\label{000eqn3.3}\\
&&\hnorm{\mathfrak{u}(\xi)(\tilde{x},\tilde{t}) - \mathfrak{u}(\xi)(x^*_k,t)}\ \leq\ c_6\,\left|\left|\left| (\tilde{x}-x^*_k,\tilde{t}-t) \right|\right|\right|^\alpha\ <\ \delta_k/4\label{000eqn3.4}
\end{eqnarray}
for $(\tilde{x},\tilde{t})$ close enough to $(x^*_k,t)$, i.e. for 
\[(\tilde{x},\tilde{t})\in B:=\left\{(x,t):\, \left|\left|\left| (x-x^*_k,t-t) \right|\right|\right| < (\delta_k/(4\,c_6))^{1/\alpha} \right\} \,.\]

Now we are left to estimate the remaining term in (\ref{000eqn3.1}), namely $\hnorm{\mathfrak{u}(\xi^i)(\tilde{x},\tilde{t}) - \mathfrak{u}(\xi)(\tilde{x},\tilde{t})}$. To this purpose, we will show that the sequence $\left(\mathfrak{u}(\xi^i)(\tilde{x},\tilde{t})\right)_{i=1}^{\infty}$ has a subsequence, denote it $\left(\mathfrak{u}(\xi^{i_s})(\tilde{x},\tilde{t})\right)_{s=1}^{\infty}$, such that, for some $(\tilde{x},\tilde{t})\in B$,
\begin{equation}
\mathfrak{u}(\xi^{i_s})(\tilde{x},\tilde{t}) \rightarrow \mathfrak{u}(\xi)(\tilde{x},\tilde{t})\quad\textrm{as $s\rightarrow\infty$} \,.
\label{000eqn3.5}\end{equation}
By Theorem \ref{th2} and assumption (\ref{asm5b}):
\begin{eqnarray*}
\xnorm{\mathfrak{u}(\xi^i) - \mathfrak{u}(\xi)}{L^1(Q_T)}&\leq& c_5\xnorm{g(\,.\,,\,.\,;\xi^i(\,.\,)) - g(\,.\,,\,.\,;\xi(\,.\,))}{L^1(Q_T)}\\
&\leq& c_5c_3 \sum_{j=1}^{m}\xnorm{\xi^i_j - \xi_j}{C[0,T]}\,,
\end{eqnarray*}
the latter right-hand side is convergent to $0$ by assumptions of the lemma, hence there exists a subsequence $\left(\mathfrak{u}(\xi^{i_s})\right)_{s=1}^{\infty}$ convergent to $\mathfrak{u}(\xi)$ a.e. on $Q_T$. In particular, this subsequence is convergent a.e. on $B$ and hence we can choose $(\tilde{x},\tilde{t})\in B$ such that the convergence (\ref{000eqn3.5}) holds, i.e. 
\begin{equation}
\hnorm{\mathfrak{u}(\xi^{i_s})(\tilde{x},\tilde{t}) - \mathfrak{u}(\xi)(\tilde{x},\tilde{t})}\ <\ \delta_k/4
\label{000eqn3.6}\end{equation}
for $s>s_0(\delta_k)$ and, besides, (\ref{000eqn3.3}), (\ref{000eqn3.4}) are satisfied. Thus, considering the subsequence $\left(\mathfrak{u}(\xi^{i_s})\right)_{s=1}^{\infty}$ and the point $(\tilde{x},\tilde{t})$ as above and taking (\ref{000eqn3.1}), (\ref{000eqn3.2}), (\ref{000eqn3.3}), (\ref{000eqn3.4}) and (\ref{000eqn3.6}) into account, we obtain
\[ \hnorm{f_k^{i_s}(t+h)-f_k(t)}\ <\ \delta_k\]
for $s>s_0(\delta_k)$ and $h<4\,(\delta_k/(4\,c_6))^{1/\alpha}$ what concludes the proof. 
\end{proof}

To sum up, $M_S$ is a nonempty, convex and compact subset of the Banach space $K$ of admissible controls, the operators  $\mathbf{P}$, $\mathbf{Q}$ and $\mathbf{R}$ fulfill Hypothesis \ref{as1}, \ref{as3}, \ref{as5} and \ref{as4}, thus due to Theorem~\ref{th6} we can deduce that the system (\ref{eqnmain}) has at least one solution in sense of Definition \ref{000def3} (or, equivalently, Definition \ref{000def4}) given by $(\mathfrak{u}(\kappa),\kappa)$, where $\kappa$ is a fixed point of $\mathbf{P}\circ\mathbf{Q}\circ\mathbf{R}$.

\section{Appendix}\label{appendix}\setcounter{equation}{0}
Below, technical definitions and results utilized in other sections of present paper will be provided, in part originating from other publications (\cite{as}, \cite{gs1}, \cite{np}, \cite{rob}). Still some of the formulations are adapted to the context of our paper, thus require more comments.
\begin{definition} For two arbitrary Banach spaces $X$ and $Y$, a multivalued function $F:X\longrightarrow 2^Y$ is said to be upper semicontinuous if and only if, for every $x\in X$ and for every open set $O\subset Y$ such that $F(x)\subset O$, there exists a neighborhood $U(x)$ of $x$ such that $F(U(x))\subset O$.
\label{000defap1}\end{definition}
\begin{remark}Note that for singlevalued functions the above definition reduces to the definition of continuity, not just semicontinuity. Thus, the introduced property is stronger than the standard semicontinuity.
\end{remark}
\begin{definition}
For two arbitrary Banach spaces $X$ and $Y$, a multivalued operator $T:X\longrightarrow 2^Y$ is said to be closed if and only if, whenever $x_n\rightarrow x$ in $X$, $y_n\rightarrow y$ in $Y$, $y_n\in T(x_n)$ then $y\in T(x)$. Equivalently, we can say that the graph of $T$ is closed in $X\times Y$.
\label{000defap2}\end{definition}
\begin{definition}
For a Banach space $X$ and a multivalued operator $T:X\longrightarrow 2^X$, an element $\bar{x}\in X$ is said to be a fixed point of $T$ if and only if $\bar{x}\in T(\bar{x})$.
\label{000defap3}\end{definition}
\begin{theorem}
Suppose that assumptions (\ref{asm1}), (\ref{asm2a}), (\ref{asm2b}), (\ref{asm5a}), (\ref{asm6}) are fulfilled. Then, for arbitrary $\kappa\in K$ the weak solution $u$ of problem (\ref{000eqn1}) in sense of Definition \ref{000def1} exists, is unique and belongs to $L^\infty(Q_T)$.
\label{th1}\end{theorem}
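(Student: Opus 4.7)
The plan is to combine a Galerkin construction (yielding energy estimates and existence after passage to the limit) with a truncation argument that delivers the $L^\infty$ bound, and to close with a local-Lipschitz uniqueness argument.

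\textbf{Step 1: Galerkin approximation.} Let $\{\phi_i\}_{i=1}^\infty$ be the $L^2$-orthonormal basis of $\Ha{1}$ consisting of eigenfunctions of the Neumann Laplacian (valid since $\Omega$ is a bounded $C^1$-domain by \ref{asm1}). Set $V_n=\mathrm{span}\{\phi_1,\ldots,\phi_n\}$ and look for $u_n(t)=\sum_{i=1}^n c_i^n(t)\phi_i$ solving the finite-dimensional ODE system
\[
(u_n',\phi_i)+(\nabla u_n,\nabla\phi_i)=(f(u_n),\phi_i)+(g,\phi_i),\qquad u_n(0)=P_{V_n}u_0,
\]
for $i=1,\ldots,n$. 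Local-in-time existence follows from the local Lipschitz continuity of $f$ (assumption \ref{asm2b}) via the Picard--Lindel\"of theorem applied to the ODE system for the coefficients $c_i^n$.

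\textbf{Step 2: Energy estimate and global existence of $u_n$.} Test with $u_n$ and use the one-sided bound \ref{asm2a}:
\[
\frac{1}{2}\frac{d}{dt}\|u_n\|_{L^2}^2+\|\nabla u_n\|_{L^2}^2\leq c_1|\Omega|+c_2\|u_n\|_{L^2}^2+\|g(t)\|_{L^\infty}\|u_n\|_{L^1}.
\]
Using $\|u_n\|_{L^1}\leq|\Omega|^{1/2}\|u_n\|_{L^2}$, Young's inequality and the assumption $g(\cdot,\cdot;\kappa(\cdot))\in L^2(0,T;L^\infty)$ (\ref{asm5a}) allows me to apply Gronwall's lemma and obtain uniform bounds
\[
\|u_n\|_{L^\infty(0,T;L^2)}+\|u_n\|_{L^2(0,T;H^1)}\leq C,
\]
which also precludes blow-up so that the Galerkin solutions exist on $[0,T]$.

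\textbf{Step 3: $L^\infty$ bound (the main technical point).} This is the heart of the argument. I would apply a Stampacchia/Moser truncation at the Galerkin level. For $M\geq\|u_0\|_{L^\infty}$, test the equation with $(u_n-M)^+$: on the set $\{u_n>M\}$ one has from \ref{asm2a} the one-sided estimate $f(u_n)\leq c_2 u_n+c_1/u_n\leq c_2 u_n+c_1/M$, hence
\[
f(u_n)(u_n-M)^+\leq c_2\bigl((u_n-M)^+\bigr)^2+(c_2M+c_1/M)(u_n-M)^+.
\]
Combined with the $L^\infty_x$ bound on $g(\cdot,t)$ and Sobolev embedding, iterating this estimate on the super-level sets (or equivalently testing against $((u_n-M)^+)^{2k-1}$ and passing $k\to\infty$) produces a uniform bound
\[
\|u_n\|_{L^\infty(Q_T)}\leq C\bigl(\|u_0\|_{L^\infty},\|g\|_{L^2(0,T;L^\infty)},c_1,c_2,T,|\Omega|\bigr).
\]
The symmetric argument applied to $(-u_n-M)^+$ controls the negative part. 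I expect this step to be the main obstacle, because the nonlinearity is only one-sidedly controlled and one has to be careful that the iteration constants do not blow up with the level index.

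\textbf{Step 4: Passage to the limit.} From the uniform bounds in $L^2(0,T;H^1)\cap L^\infty(Q_T)$ together with the equation, $(u_n')$ is bounded in $L^2(0,T;(H^1)^*)$. The Aubin--Lions lemma yields a subsequence with $u_n\to u$ strongly in $L^2(Q_T)$ and a.e.\ on $Q_T$. Since $u_n$ is uniformly bounded in $L^\infty$ and $f$ is continuous on the bounded range, dominated convergence gives $f(u_n)\to f(u)$ in $L^2(Q_T)$, so one may pass to the limit in the Galerkin identity to obtain a weak solution in the sense of Definition \ref{000def1}; the $L^\infty$ bound is inherited by $u$ through a.e.\ convergence.

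\textbf{Step 5: Uniqueness.} If $u^1,u^2$ are two weak solutions belonging to $L^\infty(Q_T)$, their difference $w=u^1-u^2$ satisfies $w_t-\Delta w=f(u^1)-f(u^2)$ with $w(0)=0$. Testing with $w$ and using the local Lipschitz constant $L$ of $f$ on $[-R,R]$ with $R=\max(\|u^1\|_\infty,\|u^2\|_\infty)$ gives
\[
\frac{1}{2}\frac{d}{dt}\|w\|_{L^2}^2+\|\nabla w\|_{L^2}^2\leq L\|w\|_{L^2}^2,
\]
and Gronwall's inequality forces $w\equiv 0$. The $L^\infty$ control from Step 3 is crucial here: without it, the local Lipschitz assumption \ref{asm2b} would not suffice to linearize the difference of nonlinearities.
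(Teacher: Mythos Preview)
Your Galerkin construction for existence (Steps~1--2, 4) and the local-Lipschitz uniqueness argument (Step~5) are sound and match what the paper does, which simply refers to Robinson for this part.

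There is, however, a genuine technical gap in Step~3: at the Galerkin level, $(u_n-M)^+$ (and a fortiori $((u_n-M)^+)^{2k-1}$) is not an element of the finite-dimensional test space $V_n$, so you cannot legitimately test the Galerkin system with it. This is not cosmetic, because your Step~4 relies on the $L^\infty$ bound for $u_n$ to control $f(u_n)$---under \ref{asm2a}--\ref{asm2b} alone $f$ may grow arbitrarily fast, so without that bound you cannot estimate $u_n'$ in $L^2(0,T;(H^1)^*)$ and the Aubin--Lions step stalls. The standard repair is to truncate $f$ first: replace $f$ by a bounded globally Lipschitz $f_N$ agreeing with $f$ on $[-N,N]$, pass to the Galerkin limit for the truncated problem (now routine), run your Stampacchia argument on the weak solution $u^N$ itself (where $(u^N-M)^+\in H^1(\Omega)$ \emph{is} admissible), obtain an $L^\infty$ bound independent of $N$, and then take $N$ larger than this bound so that $f_N(u^N)=f(u^N)$.

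For the $L^\infty$ bound the paper takes a genuinely different route: instead of level-set truncation it tests the equation with $u$ multiplied by a spatial cut-off $\xi_{\epsilon,h}$ concentrated near a fixed point $x\in\Omega$, sends $h\to 0$ and then $\epsilon\to 0$ via the Lebesgue differentiation theorem, and closes with a pointwise-in-$x$ Gronwall inequality for $t\mapsto|u(x,t)|^2$. Your Stampacchia/Moser scheme is the more standard and (once repaired as above) more transparently rigorous approach; the paper's localization argument is shorter but is presented only formally.
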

\begin{proof}
The proof of the existence and uniqueness can be done with the methods similar to those in the proof of \cite[Theorem~8.4]{rob} (still, the function spaces and restrictions for $f$ there considered are slightly different from the ones assumed in \cite{rob} what should be taken into account when showing the suitable estimates for the proof of Theorem \ref{th1}). A formal idea of the boundedness proof consists in showing for a.e. $x\in\Omega$ that $u(x,t)$ is uniformly bounded in $L^{\infty}(0,T)$. To prove the estimate for given $x\in\Omega$, we test the main equation of (\ref{000eqn1}) with $u$ multiplied by a smooth cut-off function $\xi_{\epsilon,h}$ such that
\begin{displaymath}
\begin{array}{ll} 
\xi_{\epsilon,h}\equiv 1& \textrm{on}\ B(x,\epsilon)\times (0,T) \,,\\
\xi_{\epsilon,h}\equiv 0& \textrm{on}\ (B(x,\epsilon+h)\times (0,T))^c \,,\\
\hnorm{\nabla\xi_{\epsilon,h}}\equiv h^{-1}& \textrm{on}\ \left(B(x,\epsilon+h)\setminus B(x,\epsilon)\right)\times (0,T) \,.
\end{array}
\end{displaymath}
In the obtained integral identity we pass with $h\rightarrow 0$ and then, using the Lebesgue differentiation theorem, with $\epsilon\rightarrow 0$. After that, the required estimate can be concluded by the Gronwall inequality.
\end{proof}
\begin{theorem}
Suppose that the assumptions of Theorem \ref{th1} are satisfied and $u_1$ and $u_2$ are solutions of the system (\ref{000eqn1}) corresponding to controls $\kappa_1$ and $\kappa_2$, respectively. Then there exists a positive constant $c_5$, independent on the control and initial conditions, such that:
\begin{displaymath}
\xnorm{u_1-u_2}{L^1(Q_T)}+\xnorm{u_1-u_2}{L^2(Q_T)}^2 \leq c_5\,\xnorm{g(\,.\,,\,.\,;\kappa_1(\,.\,))-g(\,.\,,\,.\,;\kappa_2(\,.\,))}{L^1(Q_T)} \,.
\end{displaymath}
\label{th2}\end{theorem}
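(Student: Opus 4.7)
The plan is to estimate the difference $w := u_1 - u_2$, which satisfies the linear parabolic problem
\[
w_t - \Delta w \;=\; \bigl(f(u_1)-f(u_2)\bigr) + G, \qquad \tfrac{\partial w}{\partial n}=0, \qquad w(0)=0,
\]
where $G := g(\,\cdot\,,\,\cdot\,;\kappa_1(\,\cdot\,)) - g(\,\cdot\,,\,\cdot\,;\kappa_2(\,\cdot\,))$. First, by Theorem~\ref{th1} both $u_1,u_2\in L^\infty(Q_T)$, so there exists a finite $M$ bounding both, and by assumption (\ref{asm2b}) the map $f$ is Lipschitz on $[-M,M]$ with some constant $L$. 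Thus $|f(u_1)-f(u_2)|\leq L|w|$ pointwise, and $\|w\|_{L^\infty(Q_T)}\leq 2M$. These two facts are the only pieces of information about $f$ and about the individual solutions that I will need.

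The core of the argument is an $L^1$-type energy estimate obtained by testing with a regularized sign. Fix a smooth, nondecreasing approximation $\phi_\varepsilon:\R\to[-1,1]$ of $\operatorname{sgn}$ and let $\Phi_\varepsilon(r)=\int_0^r\phi_\varepsilon(s)\,ds$, a convex approximation of $|r|$. Since $w\in \eLp{2}{\Ha{1}}$, the chain rule makes $\phi_\varepsilon(w)$ an admissible test function in Definition~\ref{000def1} for the equation of $w$. For any $t\in(0,T]$ one then obtains
\[
\int_\Omega \Phi_\varepsilon(w(t)) \;+\; \int_0^t\!\!\int_\Omega \phi_\varepsilon'(w)\,|\nabla w|^2
\;=\; \int_0^t\!\!\int_\Omega \bigl(f(u_1)-f(u_2)\bigr)\phi_\varepsilon(w) \;+\; \int_0^t\!\!\int_\Omega G\,\phi_\varepsilon(w),
\]
using $w(0)=0$ and $\Phi_\varepsilon(0)=0$ for the left-hand boundary term. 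The gradient term is nonnegative and is discarded. Letting $\varepsilon\to0^+$ (dominated convergence applied to each term, noting $|\phi_\varepsilon|\leq 1$ and $|\Phi_\varepsilon(r)|\leq|r|$), one arrives at
\[
\|w(t)\|_{L^1(\Omega)} \;\leq\; L\int_0^t \|w(s)\|_{L^1(\Omega)}\,ds \;+\; \|G\|_{L^1(Q_t)}.
\]
Gronwall's lemma then gives $\|w(t)\|_{L^1(\Omega)}\leq e^{LT}\|G\|_{L^1(Q_T)}$, and integration in $t$ yields
\[
\|u_1-u_2\|_{L^1(Q_T)} \;\leq\; T\,e^{LT}\,\|G\|_{L^1(Q_T)}.
\]

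The $L^2$ part is then a free consequence: using $\|w\|_{L^\infty(Q_T)}\leq 2M$,
\[
\|u_1-u_2\|_{L^2(Q_T)}^2 \;=\; \int_{Q_T}|w|^2 \;\leq\; 2M\int_{Q_T}|w| \;\leq\; 2MT\,e^{LT}\,\|G\|_{L^1(Q_T)}.
\]
Adding the two inequalities produces the claim with $c_5=(1+2M)\,T\,e^{LT}$.

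The main obstacle I anticipate is the rigorous justification of the ``$\operatorname{sgn}(w)$'' test: ensuring that $\phi_\varepsilon(w)$ is really a valid test function in the weak formulation (this is where $w\in \eLp{2}{\Ha{1}}$ with $w'\in \eLp{2}{\dual{\Ha{1}}}$ and the chain rule for such functions are needed), and that every limit as $\varepsilon\to0$ can be taken by dominated convergence. A minor but worth-mentioning delicacy is the phrase ``independent of the control and initial conditions'' in the statement: the constants $L$ and $M$ above depend on the data only through the a~priori $L^\infty$ bound provided by Theorem~\ref{th1}, so $c_5$ is uniform within the admissible class $\kappa\in M_S$, $u_0\in\Lp{\infty}$ but does depend on the structural quantities $T$, $c_1,c_2$ and the envelope of $\|u_0\|_{L^\infty}$.
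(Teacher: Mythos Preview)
Your argument is correct. The regularized-sign test on $w=u_1-u_2$ is the classical Kruzhkov/$L^1$-contraction technique, and every step you list (admissibility of $\phi_\varepsilon(w)$ as a test function via $w\in\eLp{2}{\Ha{1}}$, the chain rule for $\Phi_\varepsilon(w)$, dropping the nonnegative gradient term, dominated convergence as $\varepsilon\to0$, Gronwall, and then the $L^2$ part from $\|w\|_{L^\infty}\cdot\|w\|_{L^1}$) is standard and goes through as written.

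The paper's own proof is essentially a pointer: it transforms the weak identity in Definition~\ref{000def1} so that all derivatives fall on the test function and then quotes \cite[Theorem~4.3]{np}. That hint (``all derivatives on the test function'') points toward a duality argument --- one solves a backward linear adjoint problem and uses its solution as the test --- rather than the direct sign-approximation you use. Both routes are standard for $L^1$ stability of semilinear parabolic equations and lead to the same inequality; your approach is more self-contained (no auxiliary adjoint problem, no external reference), while the duality method has the advantage of not needing the chain rule for $\Phi_\varepsilon(w)$ in the $H^1$/$(H^1)^*$ framework. Your closing remark on the dependence of $c_5$ is accurate: the constant depends on $T$ and on the uniform $L^\infty$ bound coming from Theorem~\ref{th1}, so it is uniform over the family $\kappa\in M_S$ with the fixed $u_0$ that the paper actually uses.
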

\begin{proof}
After transforming the integral identity in Definition \ref{000def1} to the form with all derivatives put on a test function, the proof can be performed with arguments analogous to those used in the proof of \cite[Theorem~4.3]{np}.
\end{proof}
\begin{theorem}
Suppose that assumptions of Theorem \ref{th1} and assumptions (\ref{asm2c}), (\ref{asm4b}) are satisfied. Denote also
\[ \left|\left|\left| (x,t) \right|\right|\right| = \max\left(x_1^2,\ldots,x_d^2,\hnorm{\frac{t}{4}}\right)\]
for every $(x,t)\in\R^{d+1}$. Then:
\begin{enumerate}[a)]
\item\label{th3a} For arbitrary $\kappa\in K$ there exist constants $\alpha\in (0,1)$ and $c_6 >0$ such that
\[ \hnorm{u(x_1,t_1) - u(x_2,t_2)} \leq c_6 \left|\left|\left| (x_1,t_1)-(x_2,t_2) \right|\right|\right|^\alpha\]
for all $(x_1,t_1),(x_2,t_2)\in \textrm{int}\,Q_T$ (note that we exclude the choice of $(x_1,t_1),(x_2,t_2)\in \partial Q_T$).
\item\label{th3b} The constant $\alpha$ is not dependent on $\kappa$ and the constant $c_6$ depends on $\xnorm{g(\,.\,,\,.\,;\kappa(\,.\,))}{\eLp{q}{\Lp{p}}}$ continuously.
\item\label{th3c} If, additionally, assumptions (\ref{asm4a}) and (\ref{asm5c}) are fulfilled and the choice of $\kappa$ is restricted to the set $M_S$, then the constant $c_6$ can be chosen such that is not dependent on $\kappa$, i.e. $c_6 = c_6(\kappa)$ is a constant function on $M_S$.
\end{enumerate}
\label{th3}\end{theorem}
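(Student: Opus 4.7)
The plan is to reduce Theorem~\ref{th3} to the classical interior H\"older regularity theorem for linear parabolic equations with right-hand side in $\eLp{q}{\Lp{p}}$ satisfying the subcritical condition $\tfrac{d}{2p}+\tfrac{1}{q}<1$ imposed in assumption (\ref{asm4b}); this is the Ladyzhenskaya--Solonnikov--Ural'tseva / De Giorgi--Nash--Moser regime, in which the estimate can be obtained via parabolic Morrey--Campanato arguments. Setting $F(x,t):=f(u(x,t))+g(x,t;\kappa(t))$, the weak solution $u$ given by Theorem~\ref{th1} satisfies $u_t-\Delta u = F$ on $Q_T$ with homogeneous Neumann data, so once $F$ is placed in $\eLp{q}{\Lp{p}}$ the quoted theorem produces an exponent $\alpha\in(0,1)$ depending only on $d,p,q$ and a constant depending continuously on $\xnorm{F}{\eLp{q}{\Lp{p}}}$, $\xnorm{u}{L^\infty(Q_T)}$, and the parabolic distance from the interior point to $\partial Q_T$. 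The quasimetric $\left|\left|\left|\, \cdot \,\right|\right|\right|$ in the statement is merely the standard parabolic distance up to a harmless time rescaling, so the H\"older bound transfers directly.

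The first step is to verify that $F\in\eLp{q}{\Lp{p}}$. Theorem~\ref{th1} provides $u\in L^\infty(Q_T)$; assumptions (\ref{asm2b}) and (\ref{asm2c}) then give $|f(u)|\leq L\,|u|$ with $L=L\!\left(\xnorm{u}{L^\infty(Q_T)}\right)$ coming from the local Lipschitz property of $f$ and $f(0)=0$, so $f(u)\in L^\infty(Q_T)\subset\eLp{q}{\Lp{p}}$ on the bounded cylinder. Combined with assumption (\ref{asm5a}) this settles $F\in\eLp{q}{\Lp{p}}$ and hence part (\ref{th3a}). The exponent $\alpha$ from the quoted theorem is purely a function of $d,p,q$, which proves the $\alpha$-part of (\ref{th3b}); the dependence of $c_6$ on $\kappa$ enters only through $\xnorm{u}{L^\infty(Q_T)}$ and $\xnorm{g(\,.\,,\,.\,;\kappa(\,.\,))}{\eLp{q}{\Lp{p}}}$, and the former is itself continuously controlled by the latter through the $L^\infty$ estimate established in the proof of Theorem~\ref{th1}, so the $c_6$-part of (\ref{th3b}) follows as well.

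For part (\ref{th3c}), once $\kappa$ is restricted to $M_S$ assumption (\ref{asm5c}) supplies the uniform bound $\xnorm{g(\,.\,,\,.\,;\kappa(\,.\,))}{\eLp{q}{\Lp{p}}}\leq c_4$, and this uniform bound propagates through the $L^\infty$ estimate for $u$ and hence through $\xnorm{F}{\eLp{q}{\Lp{p}}}$, yielding a single $c_6$ valid for every $\kappa\in M_S$. The main obstacle I anticipate is the careful bookkeeping of these dependencies --- specifically, revisiting the proof of Theorem~\ref{th1} to confirm that the $L^\infty$ bound on $u$, and thus the local Lipschitz constant of $f$ controlling $\xnorm{f(u)}{L^\infty(Q_T)}$, is continuously (and, on $M_S$, uniformly) controlled by $\xnorm{g(\,.\,,\,.\,;\kappa(\,.\,))}{\eLp{q}{\Lp{p}}}$ together with the fixed data $u_0$ and the linear growth constants $c_1,c_2$ from (\ref{asm2a}). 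Once this traceability is in place, the continuous and monotone dependence of the H\"older constant in the classical interior parabolic regularity theorem delivers both (\ref{th3b}) and (\ref{th3c}).
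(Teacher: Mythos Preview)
Your proposal is correct and follows essentially the same strategy as the paper: use the $L^\infty$ bound on $u$ from Theorem~\ref{th1} to tame the nonlinearity $f(u)$, then invoke classical interior parabolic H\"older regularity for a right-hand side in $\eLp{q}{\Lp{p}}$ with $\tfrac{d}{2p}+\tfrac{1}{q}<1$, and finally trace the constants to establish (\ref{th3b}) and (\ref{th3c}). The only cosmetic differences are that the paper cites Aronson--Serrin \cite{as} (quasilinear framework) rather than Ladyzhenskaya--Solonnikov--Ural'tseva, and handles $f$ by truncating it outside $[-\xnorm{u}{L^\infty},\xnorm{u}{L^\infty}]$ to fit the structure conditions of \cite{as}, whereas you absorb $f(u)$ directly into the source term $F$ via the bound $|f(u)|\leq L(\xnorm{u}{L^\infty})\,|u|$; these are equivalent maneuvers.
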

\begin{proof}
A proof of \ref{th3a}) can be found in \cite[Theorem~4]{as}, however we still give here short consistency remarks. 

The definition of weak solutions to a parabolic equation considered in \cite{as} (whose particular case is (\ref{000eqn1})) differs from Definition \ref{000def1} (the former assumes that the time derivative is put on a test function and that the test functions are of class $C^{1}(Q_T)\cap\{\phi(T,\,.\,)\equiv 0\}$), hence first we need to transform the integral identity in Definition \ref{000def1} to the form as in \cite{as}. 

Besides, in \cite{as} the nonlinear term is assumed to satisfy $\hnorm{\bar{f}(s)}\leq c_7\hnorm{s}$ with respect to the values of solution, differing from our case (since assumption (\ref{asm2}) allows e.g. $f(s)=-s^3+s$). But due to Theorem \ref{th1} for a given initial condition $u_0$ and a control $\kappa$ any solution $u$ of (\ref{000eqn1}) is bounded. It can be shown that a bounded solution $u$ of (\ref{000eqn1}) is also a solution of an analogous system with the nonlinear $f$ term replaced by
\[ \bar{f}(s)=\left\{\begin{array}{ll} 
+c_7&\textrm{for $s$ s.t. $s>c_7$}\\
f(s)&\textrm{for $s$ s.t. $s\in[-c_7,c_7]$}\\
-c_7&\textrm{for $s$ s.t. $s<-c_7$}
\end{array}\right. \,,\]
where $c_7=\textrm{ess sup}_{(x,t)\in Q_T}\hnorm{u(x,t)}$. $\bar{f}$ as above belongs to the class of nonlinear terms admissible in \cite{as}.

After having overcome the above difficulties, \cite[Theorem~4]{as} can be applied.

The assertion \ref{th3b}) can be deduced by a technically extensive analysis of the proof of \cite[Theorem~4]{as}. The latter proof consists in using a Harnack-type inequality, shown by suitable cut-off techniques.

The assertion \ref{th3c}) is a consequence of \ref{th3b}). Since $c_6$ depends on $\xnorm{g(\,.\,,\,.\,;\kappa(\,.\,))}{\eLp{q}{\Lp{p}}}$ continuously and the latter value is bounded with respect to $\kappa$ for $\kappa\in M_S$, the values of $c_6$ also are bounded and thus it suffices to choose a constant $\bar{c_6}=\sup_{\kappa\in M_S}{c_6}$ in \ref{th3a}).
\end{proof}
\begin{theorem}
Let $W$ be a multivalued function satisfying assumptions (\ref{asm3a}), (\ref{asm3b}) and (\ref{asm3c}), $f\in C((0,T);\R^n)$ (where $C((0,T);\R^n)$ is considered with standard supremum norm) and $\mathbb{W}(t):=W(t,f(t))$ on $[0,T]$. Then there exists a measurable selection, i.e. a measurable function $\mathbb{V}:[0,T]\longrightarrow R$, such that $\mathbb{V}(t)\in \mathbb{W}(t)$ for a.e. $t\in [0,T]$.
\label{th4}\end{theorem}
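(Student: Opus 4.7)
The strategy is to produce an explicit measurable selection of $\mathbb{W}$ by exploiting the fact that nonempty, closed, bounded, convex subsets of $\R$ are exactly compact intervals, so taking the left endpoint yields a natural candidate.

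First I would verify that $\mathbb{W}:(0,T)\to 2^{\R}$, $\mathbb{W}(t):=W(t,f(t))$, is itself upper semicontinuous, with nonempty, closed, convex values contained in the fixed interval $[-C,C]$ supplied by assumption (\ref{asm3c}). The map $t\mapsto (t,f(t))$ is continuous because $f$ is, and upper semicontinuity is preserved under composition with continuous single-valued maps: given an open $O\supset \mathbb{W}(t_0)$, assumption (\ref{asm3b}) yields a neighborhood $U\subset [0,T]\times\R^n$ of $(t_0,f(t_0))$ with $W(U)\subset O$, and continuity of $t\mapsto(t,f(t))$ then furnishes a neighborhood of $t_0$ that is mapped into $U$.

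Because each $\mathbb{W}(t)$ is a nonempty, closed, bounded, convex subset of $\R$, it is a compact interval $[a(t),b(t)]$; in particular $a(t)=\min \mathbb{W}(t)$ exists and lies in $\mathbb{W}(t)$. I would take
\[ \mathbb{V}(t):=a(t), \]
extended arbitrarily at the endpoints $\{0,T\}$ (a null set).

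It remains to check that $\mathbb{V}$ is measurable. For every $\alpha\in\R$,
\[ \{t\in (0,T):\mathbb{V}(t)>\alpha\}=\{t\in(0,T):\mathbb{W}(t)\subset (\alpha,+\infty)\}, \]
and upper semicontinuity of $\mathbb{W}$, applied with the open set $(\alpha,+\infty)$, forces the right-hand side to be open in $(0,T)$; hence $\mathbb{V}$ is lower semicontinuous and a fortiori Borel measurable. No essential difficulty is expected: the one-dimensionality of the codomain makes the argument elementary, whereas for a vector-valued target one would have to invoke the full Kuratowski-Ryll-Nardzewski selection machinery.
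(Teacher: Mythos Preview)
Your argument is correct. The composition step is standard, and the key observation---that in $\R$ the upper semicontinuity of $\mathbb{W}$ forces $\{t:\min\mathbb{W}(t)>\alpha\}=\{t:\mathbb{W}(t)\subset(\alpha,\infty)\}$ to be open---cleanly yields lower semicontinuity, hence Borel measurability, of the minimum selection.

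The paper itself gives no argument at all: it simply states that the proof ``follows in the same manner as that of \cite[Lemma~3.4]{gs1}'' (Glashoff--Sprekels). Your route is therefore more explicit and more self-contained than what the paper offers. The advantage of your approach is that it avoids any black-box selection theorem by exploiting the fact that closed bounded convex subsets of $\R$ are intervals, so an extremal selection is available and its measurability is immediate from upper semicontinuity. The trade-off, which you already flag, is that this shortcut is specific to scalar-valued $W_j$; for $\R^m$-valued targets one would indeed fall back on Kuratowski--Ryll-Nardzewski or a similar device, which is presumably closer to what the cited lemma does.
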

\begin{flushright}$\Box$\end{flushright}
The proof of Theorem \ref{th4} follows in the same manner as that of \cite[Lemma~3.4]{gs1}.
\begin{theorem}
If $W$, $f$ and $\mathbb{W}$ are as in Theorem \ref{th4}, then there exists a solution to (\ref{000eqn2}) in the sense of Definition \ref{000def2}.
\label{th5}\end{theorem}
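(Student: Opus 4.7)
The plan is to convert the differential inclusion into a genuine ordinary differential equation by invoking a measurable selection, and then write down its solution explicitly via the variation-of-constants formula already recorded in Proposition \ref{000le1}.

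First I would apply Theorem \ref{th4} directly: under the standing assumptions on $W$ (namely (\ref{asm3a}), (\ref{asm3b}), (\ref{asm3c})) together with the continuity of $f$, there exists a measurable function $\mathbb{V}:[0,T]\to\R$ such that $\mathbb{V}(t)\in \mathbb{W}(t)=W(t,f(t))$ for a.e.\ $t\in[0,T]$. By the uniform bound in (\ref{asm3c}) we have $|\mathbb{V}(t)|\le C$ a.e., so in particular $\mathbb{V}\in L^\infty(0,T)\subset L^1(0,T)$.

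Next I would consider the Carath\'eodory problem (\ref{000eqn3}) with this $\mathbb{V}$ on its right-hand side. Since $\mathbb{V}$ is integrable and the equation is linear in $\gamma$ with constant coefficients, a solution is given explicitly by
\[
\gamma(t) \;=\; e^{-t/\beta}\gamma_0 \;+\; \frac{1}{\beta}\int_0^t e^{-(t-s)/\beta}\,\mathbb{V}(s)\,ds.
\]
Because $\mathbb{V}\in L^1(0,T)$, the function $t\mapsto \int_0^t e^{-(t-s)/\beta}\,\mathbb{V}(s)\,ds$ is absolutely continuous on $[0,T]$, hence $\gamma\in AC[0,T]$. By the Lebesgue differentiation theorem, differentiating the formula yields $\beta\dot{\gamma}(t)+\gamma(t)=\mathbb{V}(t)$ for a.e.\ $t\in[0,T]$, and the initial condition $\gamma(0)=\gamma_0$ is immediate. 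Thus $\gamma$ solves (\ref{000eqn3}) in the Carath\'eodory sense with right-hand side $\mathbb{V}$.

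Finally, by construction $\mathbb{V}(t)\in \mathbb{W}(t)$ for a.e.\ $t$, so part (a) of Definition \ref{000def2} is satisfied and $\gamma$ is a solution to (\ref{000eqn2}) in the required sense. There is essentially no obstacle here: the only nontrivial ingredient is the existence of a measurable selection, which has already been isolated as Theorem \ref{th4}; everything else is the standard integrating-factor computation for a scalar linear ODE with an $L^\infty$ forcing term.
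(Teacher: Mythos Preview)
Your proof is correct and follows essentially the same approach as the paper: obtain a bounded measurable selection $\mathbb{V}$ from Theorem~\ref{th4}, then solve the resulting linear Carath\'eodory ODE~(\ref{000eqn3}). The only cosmetic difference is that the paper cites Coddington--Levinson for the Carath\'eodory existence step, whereas you write down the integrating-factor formula from Proposition~\ref{000le1} explicitly and verify it by hand.
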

\begin{proof}
This is a direct corollary of Theorem \ref{th4}. Since $W$ obeys assumption (\ref{asm3c}), $\mathbb{W}$ is bounded and hence any selection $\mathbb{V}(t)\in \mathbb{W}(t)$ (for a.e. $t\in [0,T]$) is bounded as well. By Theorem \ref{th4}, there exists a measurable selection. A selection $\mathbb{V}\in L^\infty(0,T)$ which is bounded and measurable belongs to $L^{\infty}(0,T)$. The existence of a solution to (\ref{000eqn3}) in the Carath\'eodory sense is known (cf. \cite[Chap.~3, Sec.~3]{cl}).
\end{proof}

\section{Concluding remarks}\setcounter{equation}{0}
The methods introduced in this paper can be applied to a broad class of reaction-diffusion models, which share certain key properties with our model, including a whole range of Allen-Cahn models. These properties are: existence of solutions, their uniqueness and regularity and the system stability with respect to perturbations of a control term, as in corresponding theorems in Section \ref{appendix}.

Once the existence question is answered for (\ref{eqnmain}), one can pose a problem of optimal localization of control devices and measurement points. This is a direction of our present research. In the case of Lipschitz continuous $W_j$ in (\ref{eqnmain}) we expect to obtain a result which says that for a given reference state of the system and a given number of the control devices and measurement point it is possible to localize those devices and points in a way which is optimal with respect to the task of preserving the evolution of the system as close as possible to the reference state. Considering the latter restriction on $W_j$ also allows us to formulate the uniqueness result complementing the existence theorem presented in this paper. 

An optimal implementation of the control algorithms will be included in \textsc{Grzegorz Dudziuk}'s Ph.D. thesis (under preparation).

\end{document}